\documentclass{amsart}
\usepackage{amsfonts,amssymb,amsmath,amsthm,mathrsfs}
\usepackage{url}
\usepackage{enumerate}

\urlstyle{sf}
\newtheorem{theorem}{Theorem}[section]
\newtheorem{lemma}[theorem]{Lemma}

\theoremstyle{definition}
\newtheorem{definition}[theorem]{Definition}
\newtheorem{remark}[theorem]{Remark}
\numberwithin{equation}{section}

\author[J. Chen]{Jiecheng Chen}
\address{
Jiecheng Chen, Department of  Mathematics,
Zhejiang Normal University,
Jinhua  321004,
P. R. China
}\email{jcchen@zjnu.edu.cn}
\author[G. Hu]{Guoen Hu}
\address{Guoen Hu, Department of  Applied Mathematics, Zhengzhou Information Science and Technology Institute,
Zhengzhou 450001,
P. R. China}
\email{guoenxx@163.com}
\thanks{The research of the first author was supported by the NNSF of
China under grant $\#$11671363, and the research of the second (corresponding) author was supported by
the NNSF of
China under grant $\#$11371370.}

\subjclass[2010]{Primary 42B20}

\keywords{Calder\'on commutator, weighted inequality, multilinear singular integral operator,  sparse operator,
multiple weight.}

\begin{document}

\title[Calder\'on commutator]{Weighted estimates for the Calder\'on commutator}

\begin{abstract}
In this paper, the authors consider the weighted estimates for the Calder\'on commutator defined by
\begin{eqnarray*}
&&\mathcal{C}_{m+1,\,A}(a_1,\dots,a_{m};f)(x)\\
&&\quad={\rm p.\,v.}\,\int_{\mathbb{R}}\frac{P_2(A;\,x,\,y)\prod_{j=1}^m(A_j(x)-A_j(y))}{(x-y)^{m+2}}f(y){\rm d}y,
\end{eqnarray*}
with $P_2(A;\,x,\,y)=A(x)-A(y)-A'(y)(x-y)$.
Dominating this operator by multi(sub)linear sparse operators, the authors establish the weighted bounds from
$L^{p_1}(\mathbb{R},w_1)$ $\times\dots\times L^{p_m}(\mathbb{R},w_m)$ to $L^{p}(\mathbb{R},\nu_{\vec{w}})$, with $p_1,\dots,p_m \in (1,\,\infty)$, $1/p=1/p_1+\dots+1/p_m$, and $\vec{w}=(w_1,\,\dots,\,w_m)\in A_{\vec{P}}(\mathbb{R}^{m+1})$. The authors also obtain the weighted weak type endpoint estimates for $\mathcal{C}_{m+1,\,A}$.\end{abstract}

\maketitle
\section{Introduction}

As it is well known, the   Calder\'on commutator was arisen in the study of the $L^2(\mathbb{R})$ boundedness for the Cauchy integral along Lipschitz curves. Let $A_1,\,\dots, A_m$ be functions defined on $\mathbb{R}$ such that $a_j=A_j'\in L^{q_j}(\mathbb{R})$. Define the $(m+1)$-th order  commutator of Calder\'on by\begin{eqnarray}\mathcal{C}_{m+1}(a_1,\dots,a_m;\,f)(x)=\int_{\mathbb{R}}\frac{\prod_{j=1}^m(A_j(x)-A_j(y))}{(x-y)^{m+1}}f(y){\rm d}y.\end{eqnarray}
By $T1$ Theorem and the Calder\'on-Zygmund theory, we know that  for all $p\in (1,\,\infty)$,
$$\|\mathcal{C}_{m+1}(a_1,\dots,a_m;\,f)\|_{L^p(\mathbb{R})}\lesssim \prod_{j=1}^m\|a_j\|_{L^{\infty}(\mathbb{R})}\|f\|_{L^p(\mathbb{R})},$$
and, $\mathcal{C}_{m+1}$ is bounded from $L^{\infty}(\mathbb{R})\times \dots\times L^{\infty}(\mathbb{R})\times L^1(\mathbb{R})$ to $L^{1,\,\infty}(\mathbb{R})$. For the case of $m=1$, it is known that $\mathcal{C}_{2}$ is bounded from $L^p(\mathbb{R})\times L^q(\mathbb{R})$ to $L^r(\mathbb{R})$ provided that $p,\,q\in (1,\,\infty)$ and $r\in (1/2,\,\infty)$ with $1/r=1/p+1/q$; moreover,
it is bounded from $L^p(\mathbb{R})\times L^q(\mathbb{R})$ to $L^{r,\,\infty}(\mathbb{R})$ if $\min\{p,\,q\}=1$, see \cite{ca1, ca2} for details.
By establishing the weak type endpoint estimates for multilinear singular integral operator with nonsmooth kernels, and reducing the operator $\mathcal{C}_{m+1}$ to suitable multilinear singular integral with nonsmooth kernel, Duong, Grafakos and Yan \cite{dgy} proved the following theorem.
\begin{theorem}\label{t1.1} Let $m\in\mathbb{N}$, $p_1,\,\dots,\,p_{m+1}\in [1,\,\infty)$ and $p\in (1/(m+1),\,\infty)$ with $1/p=1/p_1+\dots+1/p_{m+1}$. Then
$$\|\mathcal{C}_{m+1}(a_1,\dots,a_m;\,f)\|_{L^{p,\,\infty}(\mathbb{R})}\lesssim\prod_{j=1}^m\|a_j\|_{L^{p_j}(\mathbb{R})}
\|f\|_{L^{p_{m+1}}(\mathbb{R})}.$$
Moreover, if $\min_{1\leq j\leq m}p_j>1$, then
$$\|\mathcal{C}_{m+1}(a_1,\dots,a_m;\,f)\|_{L^{p}(\mathbb{R})}\lesssim\prod_{j=1}^m\|a_j\|_{L^{p_j}(\mathbb{R})}
\|f\|_{L^{p_{m+1}}(\mathbb{R})}.$$
\end{theorem}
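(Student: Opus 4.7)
The plan is to view the $(m+1)$-th order Calder\'on commutator as an $(m+1)$-linear operator $T$ acting on $(a_1,\dots,a_m,f)$. Using the identity
\[
A_j(x)-A_j(y)=\int_{\mathbb{R}}a_j(z_j)\chi_{I(x,y)}(z_j)\,dz_j,
\]
where $I(x,y)$ denotes the closed interval with endpoints $x$ and $y$ (with an appropriate sign), $T$ is expressed as an integral against the $(m+1)$-linear kernel
\[
K(x,z_1,\dots,z_m,y)=\pm(x-y)^{-(m+1)}\prod_{j=1}^m\chi_{I(x,y)}(z_j).
\]
This kernel has the expected size bound off the multi-diagonal, but it fails to be smooth in the variables $z_j$, so the standard multilinear Calder\'on--Zygmund theory does not apply directly. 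I would therefore place $T$ into the framework of multilinear operators with \emph{nonsmooth} kernels, whose weak-type endpoint theory is designed precisely to bypass this obstacle.

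As a preliminary ingredient, I would record some initial strong-type estimate, for example an $L^{q_1}\times\cdots\times L^{q_{m+1}}\to L^{q}$ bound in an off-endpoint range with all $q_j>1$. Such bounds follow from Calder\'on's classical boundedness of $\mathcal{C}_{m+1}$ when all the $a_j$ lie in $L^\infty$, combined with duality and the fact that, modulo adjoints, $\mathcal{C}_{m+1}$ is invariant under permutations of its arguments. This gives the base of the interpolation tree.

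The heart of the argument is the endpoint weak-type bound
\[
\|T(a_1,\dots,a_m,f)\|_{L^{1/(m+1),\infty}(\mathbb{R})}\lesssim\prod_{j=1}^m\|a_j\|_{L^{1}(\mathbb{R})}\,\|f\|_{L^{1}(\mathbb{R})}.
\]
For this I would run a multilinear Calder\'on--Zygmund decomposition at a height dictated by the level set: decompose each input as $a_j=g_j+\sum_{Q}b_{j,Q}$ and $f=g+\sum_{Q}b_Q$. Tuples built entirely from good parts are handled by the preliminary strong-type bound and Chebyshev. For tuples containing one or more bad pieces, the classical proof would use smoothness of $K$ in each coordinate; since smoothness in the $z_j$ variables is absent, I would introduce a regularised operator $T^{(t)}$ by composing with a heat semigroup in the bad-piece variable and split
\[
T(\dots,b_{j,Q},\dots)=T^{(\ell(Q))}(\dots,b_{j,Q},\dots)+\bigl(T-T^{(\ell(Q))}\bigr)(\dots,b_{j,Q},\dots).
\]
For $T^{(\ell(Q))}$ one works with the smoothed kernel; for the difference, one must show that the averaged kernel decays away from the support of $b_{j,Q}$, exploiting cancellation of the $y$-integral together with a Lipschitz-on-average property of $K$ at dyadic scales. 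Verifying this ``off-diagonal regularity after regularisation'' for each of the $m+1$ adjoint positions is the step I expect to be the main obstacle, since the characteristic-function structure of $K$ forces all smoothness to be recovered in an integrated sense only.

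Once the $(1,\dots,1)$ weak endpoint is in hand, the full range of weak-type estimates in the theorem follows by multilinear Marcinkiewicz interpolation between this endpoint and the preliminary strong-type bound, and when $\min_j p_j>1$ the same interpolation upgrades to the strong-type conclusion.
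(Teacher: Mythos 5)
Your overall framework is the right one and matches the cited source: Theorem~\ref{t1.1} is quoted here from Duong, Grafakos and Yan~\cite{dgy}, and the paper itself only describes their strategy (rewrite $\mathcal{C}_{m+1}$ as an $(m+1)$-linear singular integral with the kernel~(\ref{eq2.02}), which is nonsmooth in $y_1,\dots,y_m$, and compensate by composing with an approximation to the identity). Your identification of the kernel, the observation that smoothness in the $y_j$ variables must be recovered in an averaged sense after regularisation, and the use of a Duong--McIntosh-type semigroup are exactly this machinery; the paper records the relevant regularised-kernel bound as Lemma~\ref{lem2.5} (for the perturbed operator $\mathcal{C}_{m+1,A}$, but the mechanism for $\mathcal{C}_{m+1}$ is the same).

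There is, however, a genuine gap in your last step. You propose to prove only the single endpoint $L^1\times\cdots\times L^1\to L^{1/(m+1),\infty}$ and then reach the entire range of Theorem~\ref{t1.1} by multilinear Marcinkiewicz interpolation against a preliminary strong-type bound with all $q_j>1$. That convex hull does not cover the target region. If you interpolate the vertex $(1,\dots,1)$ of $[0,1]^{m+1}$ (in the $(1/p_1,\dots,1/p_{m+1})$ coordinates) with any interior point, you stay in the open cube except at the vertex itself: a point with $1/p_1=\theta+(1-\theta)q_1=1$ and $q_1<1$ forces $\theta=1$. So no mixed endpoint such as $p_1=1$, $p_2,\dots,p_{m+1}>1$ is reachable, yet these are precisely the cases the theorem asserts. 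The route actually used in~\cite{dgy} (and mirrored in this paper's Lemma~\ref{lem2.7} for $\mathcal{C}_{m+1,A}$) is different: starting from a strong-type bound at some $(q_1,\dots,q_{m+1})$ with all $q_j\in(1,\infty)$, one lowers a \emph{single} exponent $q_j$ to any $p_j\in[1,q_j)$ by performing a Calder\'on--Zygmund decomposition in that one variable and regularising only that coordinate of the kernel, then iterates over $j=1,\dots,m$. This produces the whole family of mixed weak-type endpoints directly, and then interpolation gives the strong type when $\min_j p_j>1$. Your proposal should be reorganised around this one-variable-at-a-time reduction rather than a one-shot multilinear CZ decomposition followed by interpolation; as written, the final interpolation claim does not close the argument.
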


Considerable attention has also been paid to the weighted estimates for $\mathcal{C}_{m+1}$. Duong, Gong, Grafakos, Li and Yan \cite{dggy} considered the weighted estimates with $A_p(\mathbb{R})$ weights for $\mathcal{C}_{m+1}$, they proved that if $p_1,\,\dots,\,p_{m+1}\in (1,\,\infty)$, $p\in (1/m,\,\infty)$ with $1/p=1/p_1+\dots+1/p_{m+1}$, then for $w\in A_p(\mathbb{R})$, $\mathcal{C}_{m+1}$ is bounded from $L^{p_1}(\mathbb{R},\,w)\times\dots\times L^{p_{m+1}}(\mathbb{R},\,w)$ to $L^p(\mathbb{R},\,w)$, here and in the following, $A_p(\mathbb{R}^n)$ denotes the weight function class of Muckenhoupt, see \cite{gra} for definitions and properties of $A_p(\mathbb{R}^n)$. Grafakos, Liu and Yang \cite{gly} considered the weighted estimates with following multiple $A_{\vec{P}}$ weights, introduced by Lerner, Ombrossi,  P\'erez,  Torres and Trojillo-Gonzalez \cite{ler}.
\begin{definition}\label{d1.1}
Let $m\in\mathbb{N}$, $w_1,\dots,w_m$ be
weights, $p_1,\dots,p_m\in[1,\,\infty)$, $p\in [1/m,\,\infty)$ with
$1/p=1/p_1+\dots+1/p_m$. Set $\vec{w}=(w_1,\,\dots,\,w_m)$,
$\vec{P}=(p_1,\,...,\,p_m)$ and
$\nu_{\vec{w}}=\prod_{k=1}^{m}w_k^{p/p_k}$. We say that $\vec{w}\in
A_{\vec{P}}(\mathbb{R}^{mn})$ if the $A_{\vec{P}}(\mathbb{R}^{mn})$ constant of $\vec{w}$, defined by
$$[\vec{w}]_{A_{\vec{P}}}=\sup_{Q\subset
\mathbb{R}^n}\Big(\frac{1}{|Q|}\int_Q\nu_{\vec{w}}(x)\,{\rm
d}x\Big)
\prod_{k=1}^m\Big(\frac{1}{|Q|}\int_Qw_k^{-\frac{1}{{p_k}-1}}(x)\,{\rm
d}x\Big)^{p/p_k'},$$ is finite, here and in the following, for $r\in [1,\,\infty)$, $r'=\frac{r}{r-1}$; when
$p_k=1$,
$\Big(\frac{1}{|Q|}\int_Qw_k^{-\frac{1}{p_k-1}}\Big)^{\frac{1}{p_k'}}$ is understood as $(\inf_{Q}w_k\big)^{-1}$.\end{definition}

Using some new maximal operators, Grafakos, Liu and Yang \cite{gly} proved that if $p_1,\,\dots,\,p_{m+1}\in [1,\,\infty)$ and $p\in [\frac{1}{m+1},\,\infty)$ with $1/p=1/p_1+\dots+1/p_{m+1}$,  and $\vec{w}=(w_1,\,\dots,\,w_m, w_{m+1})\in A_{\vec{P}}(\mathbb{R}^{m+1})$, then $\mathcal{C}_{m+1}$ is bounded
from $L^{p_1}(\mathbb{R},\,w_1)\times \dots\times L^{p_{m+1}}(\mathbb{R},\,w_m)$ to $L^{p,\,\infty}(\mathbb{R}, \nu_{\vec{w}})$, and when $\min_{1\leq j\le m+1}p_j>1$, $\mathcal{C}_{m+1}$ is bounded from $L^{p_1}(\mathbb{R}, w_1)\times \dots\times L^{p_{m+1}}(\mathbb{R},w_{m+1})$ to $L^{p}(\mathbb{R},\,\nu_{\vec{w}})$.
Fairly recently, by dominating multilinear singular integral operators by sparse operators, Chen and Hu \cite{chenhu} improved the result of Grafakos et al. in \cite{gly}, and obtain the following quantitative weighted bounds for $\mathcal{C}_{m+1}$.
\begin{theorem}Let $m\in\mathbb{N}$, $p_1,\,\dots,\,p_{m+1}\in (1,\,\infty)$ and $p\in (1/(m+1),\,\infty)$ with $1/p=1/p_1+\dots+1/p_{m+1}$, $\vec{w}=(w_1,\,\dots,\,w_{m+1})\in A_{\vec{P}}(\mathbb{R}^{m+1})$. Then
\begin{eqnarray}\label{eq1.2}&&\|\mathcal{C}_{m+1}(a_1,\dots,a_m;\,f)\|_{L^{p}(\mathbb{R},\,\nu_{\vec{w}})}\\
&&\qquad\lesssim[\vec w]_{A_{\vec P}}^{\max \{1, \frac{p_1'}{p},\cdots, \frac{p'_{m+1}}{p}\}}\prod_{j=1}^m\|a_j\|_{L^{p_j}
(\mathbb{R},\,w_j)}\|f\|_{L^{p_{m+1}}(\mathbb{R},w_{m+1})}.\nonumber
\end{eqnarray}
 \end{theorem}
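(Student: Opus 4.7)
The plan is to obtain~\eqref{eq1.2} via a two-step reduction: first establish a pointwise multilinear sparse domination for $\mathcal{C}_{m+1}$, then invoke the sharp quantitative weighted theory of multilinear sparse forms. This is the strategy pioneered by Lerner in the linear setting and adapted to multilinear Calder\'on--Zygmund theory by Damian--Lerner--P\'erez, Li--Moen--Sun, and Conde-Alonso--Culiuc--Di Plinio--Ou.

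First, I would view $\mathcal{C}_{m+1}$ as an $(m+1)$-linear singular integral with a non-smooth kernel. Using the identity $A_j(x)-A_j(y)=\int_{\mathbb{R}}\chi_{I(x,y)}(t_j)\,a_j(t_j)\,dt_j$, where $I(x,y)$ denotes the closed interval with endpoints $x$ and $y$, one rewrites $\mathcal{C}_{m+1}(a_1,\dots,a_m;f)(x)$ as an integral against $a_1(t_1)\cdots a_m(t_m)f(y)$ whose kernel is not pointwise Calder\'on--Zygmund in the variables $t_j$, but whose strong $L^{p_1}\times\cdots\times L^{p_{m+1}}\to L^{p}$ bounds and weak endpoint $L^{1}\times\cdots\times L^{1}\to L^{1/(m+1),\infty}$ are already supplied by Theorem~\ref{t1.1}.

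Next, following the grand maximal truncation approach of Lerner--Ombrosi--P\'erez in its multilinear formulation, I would define a grand maximal truncation $\mathcal{M}^{\#}_{\mathcal{C}}(a_1,\dots,a_m;f)$ and prove that it is of weak type $(1,\dots,1;1/(m+1))$, using Theorem~\ref{t1.1} together with a kernel-difference decomposition described below. A standard Calder\'on--Zygmund stopping-time argument then produces, for every cube $Q_0$ and data supported in $Q_0$, a sparse family $\mathcal{S}\subset\mathcal{D}(Q_0)$ such that
\begin{equation*}
|\mathcal{C}_{m+1}(a_1,\dots,a_m;f)(x)|\lesssim \sum_{Q\in\mathcal{S}}\Big(\prod_{j=1}^{m}\langle|a_j|\rangle_{Q}\Big)\langle|f|\rangle_{Q}\,\chi_Q(x).
\end{equation*}
Granting this pointwise domination, \eqref{eq1.2} follows by duality: testing against $g\in L^{p'}(\mathbb{R},\nu_{\vec w}^{1-p'})$, applying H\"older cube by cube, and invoking the sharp multilinear $A_{\vec P}$ constant extracts the exponent $\max\{1,p_1'/p,\dots,p_{m+1}'/p\}$ on $[\vec w]_{A_{\vec P}}$.

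The technical heart is verifying the weak-type bound for $\mathcal{M}^{\#}_{\mathcal{C}}$. Because $\mathcal{C}_{m+1}$ lacks a H\"ormander-type regularity condition in the $a_j$-variables, the standard kernel-smoothness input into the sparse algorithm is unavailable. The remedy is to exchange regularity for lower-order boundedness: for two nearby points $x,x'$, the difference of the product $\prod_{j=1}^{m}(A_j(x)-A_j(y))$ can be expanded by the fundamental theorem of calculus into a sum of Calder\'on commutators of degree at most $m$, each of which satisfies the weak endpoint of Theorem~\ref{t1.1}. Arranging this expansion so that the resulting sparse form retains the correct multilinear structure, and so that one does not lose in the $A_{\vec P}$ exponent, is where the bulk of the work lies.
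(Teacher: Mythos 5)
Your overall strategy — domination by multilinear sparse forms followed by the quantitative $A_{\vec P}$ theory of sparse operators — is indeed the route taken in \cite{chenhu} (where the theorem is proved) and elaborated in Sections 2--3 of this paper for the harder operator $\mathcal{C}_{m+1,A}$. You also correctly identify the rewriting of $\mathcal{C}_{m+1}$ as an $(m+1)$-linear singular integral with kernel $K$ as in (\ref{eq2.02}), and that Theorem \ref{t1.1} supplies the endpoint boundedness. But the ``technical heart'' of your sketch rests on a mistaken premise, and the remedy you propose does not work as stated.

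You assert that because the kernel lacks H\"ormander regularity in the $y_j$-variables, ``the standard kernel-smoothness input into the sparse algorithm is unavailable.'' This is not so. The grand maximal truncation $\mathcal{M}^{\kappa}_{U}$ probes the operator only through differences in the $x$-variable, and the rewritten kernel $K(x;y_1,\dots,y_{m+1})$ does satisfy the requisite H\"ormander condition in $x$: Lemma \ref{lem2.3} gives $|K(x;\cdot)-K(x';\cdot)|\lesssim |x-x'|(\sum_j|x-y_j|)^{-m-2}$ when $12|x-x'|<\min_j|x-y_j|$. That, together with the size estimate (\ref{eq2.03}) and the weak endpoint of Theorem \ref{t1.1}, is exactly what is fed into the stopping-time algorithm of Theorem \ref{thm3.1}. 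The missing smoothness in $y_1,\dots,y_m$ enters only in proving Theorem \ref{t1.1} itself (which is why the Duong--Grafakos--Yan nonsmooth-kernel machinery is needed there), not in the sparse domination step.

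The proposed fix is also flawed. Expanding $\prod_{j}(A_j(x)-A_j(y))-\prod_{j}(A_j(x')-A_j(y))$ by the fundamental theorem of calculus produces summands of the form $(A_k(x)-A_k(x'))\prod_{j\neq k}(A_j(\cdot)-A_j(y))$, and after reinserting the factor $(x-y)^{-m-1}$, the resulting integrals have one too many powers of $(x-y)^{-1}$ relative to the number of remaining $A_j$-differences, so they are not Calder\'on commutators of degree $\le m$ and Theorem \ref{t1.1} does not apply to them. Moreover $A_k(x)-A_k(x')=\int_{x'}^{x}a_k$ is not small unless $a_k$ is bounded, whereas here $a_k\in L^{p_k}$ only. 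The correct and more economical argument is to work throughout with the rewritten kernel $K$ and rely on Lemma \ref{lem2.3} (regularity in $x$) and Lemma \ref{lem2.4} (local Kolmogorov-type estimate via Theorem \ref{t1.1}); no FTC expansion of the $A_j$-product is needed.
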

We remark that the quantitative weighted bounds for classical operators in harmonic analysis was begun by Buckley \cite{bu} and then by many other authors, see \cite{hyt,pet2,hlp,hp2, ler2,ler3, lern, lms} and references therein.

Observe that (\ref{eq1.2}) also hold if $\max_{1\leq j\leq m}p_j=\infty$ but $p\in (\frac{1}{m+1},\,\infty)$ (in this case,
$\|a_j\|_{L^{\infty}
(\mathbb{R},\,w_j)}$ should be replaced by $\|a_j\|_{L^{\infty}
(\mathbb{R})}$ and $w_k$ should be replaced by $1$ if $p_k=\infty$). A natural question is: if a result similar to (\ref{eq1.2}) holds true when $a_j\in {\rm BMO}(\mathbb{R})$ for some $1\leq j\leq m$? In this paper, we consider the operator defined by
\begin{eqnarray}\label{eq1.3}
&&\mathcal{C}_{m+1,\,A}(a_1,\dots,a_{m};\,f)(x)\\
&&={\rm p.\,v.}\,\int_{\mathbb{R}}\frac{P_2(A;\,x,\,y)\prod_{j=1}^m(A_j(x)-A_j(y))}{(x-y)^{m+2}}f(y){\rm d}y,\nonumber
\end{eqnarray}
with $P_2(A;\,x,\,y)=A(x)-A(y)-A'(y)(x-y)$. When $a_1,\dots,a_m\in L^{\infty}(\mathbb{R})$,  it is obvious that $\prod_{j=1}^m(A_j(x)-A_j(y))(x-y)^{-m-1}$ is a Calder\'on-Zygmund kernel. Repeating the argument in \cite{cohen}, we know that for any $p\in (1,\,\infty)$,
\begin{eqnarray}\label{equa:1.4}\|\mathcal{C}_{m+1,\,A}(a_1,\dots,a_{m};f)\|_{L^p(\mathbb{R})}\lesssim \|A'\|_{{\rm BMO}(\mathbb{R})}\prod_{j=1}^{m}\|a_j\|_{L^{\infty}(\mathbb{R})}\|f\|_{L^p(\mathbb{R})}.\end{eqnarray}
Moreover,  the results in \cite{hy} implies that
for each $\lambda>0$,
$$|\{x\in\mathbb{R}: \mathcal{C}_{m+1,\,A}(a_1,\dots,a_{m};f)(x)>\lambda\}|\lesssim_{a_1,\dots,a_m} \int_{\mathbb{R}}\frac{|f(x)|}{\lambda}\log \Big({\rm e}+\frac{|f(x)|}{\lambda}\Big){\rm d}x.$$
Operators like $\mathcal{C}_{m+1,\,A}$  with $a_j\in L^{\infty}(\mathbb{R})$ were introduced by Cohen \cite{cohen}, and then considered by Hofmann \cite{hof} and other authors, see also \cite{hu,huli, hy} and the related references therein.

Our main purpose in this paper is to establish the weighted bound similar to (1.2) for the operator $\mathcal{C}_{m+1,\,A}$ in (\ref{eq1.3}). For a weight $u\in A_{\infty}(\mathbb{R}^n)=\cup_{p\geq 1}A_p(\mathbb{R}^n)$, $[u]_{A_{\infty}}$, the $A_{\infty}$ constant of $u$, is defined by
$$[u]_{A_{\infty}}=\sup_{Q\subset \mathbb{R}^n}\frac{1}{u(Q)}\int_{Q}M(u\chi_Q)(x){\rm d}x.$$
Recall that  for $p_1,\,\dots,\,p_m\in [1,\,\infty)$,  $\vec{w}=(w_1,\,\dots,\,w_m)\in A_{\vec{P}}(\mathbb{R}^{mn})$ if and only if
$\nu_{\vec{w}}\in A_{mp}(\mathbb{R}^n)$ and $w_j^{-\frac{1}{p_j-1}}\in A_{mp_j'}(\mathbb{R}^n)$ (see \cite{ler} for details). Our main result  can be stated as follows.
\begin{theorem}\label{t1.3}
Let $m\in\mathbb{N}$, $p_1,\dots,p_{m+1}\in [1,\infty)$, $p\in [\frac{1}{m+1},\infty)$ with $1/p=1/p_1+\dots+1/p_{m+1}$,  $\vec{w}=(w_1,\dots,w_{m+1})\in A_{\vec{P}}(\mathbb{R}^{m+1})$,  $A'\in {\rm BMO}(\mathbb{R})$ with $\|A'\|_{{\rm BMO}(\mathbb{R})}=1$.  \begin{itemize}
\item[\rm (i)] If $\min_{1\leq j\leq m+1}p_j>1$, then
\begin{eqnarray*}\|\mathcal{C}_{m+1,A}(a_1,\dots,a_m;f)\|_{L^p(\mathbb{R},\nu_{\vec{w}})}&\lesssim&[\vec w]_{A_{\vec P}}^{\max \{1,\frac{p_1'}{p},\cdots, \frac{p'_{m+1}}{p}\}}\big[w_m^{-\frac{1}{p_m-1}}\big]_{A_{\infty}}\\
&&\times
\|f\|_{L^{p_{m+1}}(\mathbb{R},w_{m+1})}\prod_{j=1}^m\|a_j\|_{L^{p_j}(\mathbb{R},w_j)};\end{eqnarray*}
\item[\rm (ii)] if $p_1=\dots=p_{m+1}=1$, then for each $\lambda>0$,
\begin{eqnarray*}
&&\nu_{\vec{w}}(\{x\in\mathbb{R}:\,|\mathcal{C}_{m+1,A}(a_1,\dots,a_m;\,f)(x)|>\lambda\}\big)\\
&&\quad\lesssim \Big(\prod_{j=1}^m\int_{\mathbb{R}}\frac{|a_j(y_j)|}{\lambda^{\frac{1}{m+1}}}\log\Big({\rm e}+\frac{|a_j(y_j)|}{\lambda^{\frac{1}{m+1}}}\Big)w_j(y_j)w_j(y_j){\rm d}y_j\Big)^{\frac{1}{m+1}}\\
&&\quad\times\Big(\int_{\mathbb{R}}\frac{|f(y)|}{\lambda^{\frac{1}{m+1}}}\log\Big({\rm e}+\frac{|f(y)|}{\lambda^{\frac{1}{m+1}}}\Big)w_{m+1}(y){\rm d}y\Big)^{\frac{1}{m+1}}.
\end{eqnarray*}
\end{itemize}
\end{theorem}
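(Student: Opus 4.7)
The plan is to extend the sparse-domination approach of Chen--Hu \cite{chenhu} for the classical Calder\'on commutator to accommodate the BMO input $A'$ entering through the factor $P_2(A;x,y)$. The key algebraic identity
$$P_2(A;x,y)=\int_y^x\bigl(A'(t)-\langle A'\rangle_Q\bigr)\,{\rm d}t-\bigl(A'(y)-\langle A'\rangle_Q\bigr)(x-y),\qquad x,y\in Q,$$
exposes $P_2/(x-y)$ as a BMO-type oscillation of $A'$ relative to any interval $Q$. Combined with the $L(\log L)$ character of the multilinear Calder\'on structure already present for $\mathcal{C}_{m+1}$, this suggests that the right bilinear-form sparse bound is
\begin{eqnarray*}
&&|\langle\mathcal{C}_{m+1,A}(a_1,\dots,a_m;f),h\rangle|\\
&&\quad\lesssim\sum_{Q\in\mathcal{S}}|Q|\prod_{j=1}^m\langle a_j\rangle_{L\log L,Q}\Bigl(\langle|A'-\langle A'\rangle_Q|f\rangle_{L\log L,Q}\langle h\rangle_Q+\langle f\rangle_{L\log L,Q}\langle|A'-\langle A'\rangle_Q|h\rangle_Q\Bigr),
\end{eqnarray*}
with $\mathcal{S}$ a sparse family depending on $\vec{a},f,h$.

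To produce this sparse form I would follow Lerner's scheme: introduce a grand maximal truncation operator $\mathcal{M}^{\sharp}_{\mathcal{C}_{m+1,A}}$ tailored to $\mathcal{C}_{m+1,A}$, establish its multilinear weak-type endpoint (with an $L\log L$ upgrade on each input, in the spirit of \cite{hy} and (\ref{equa:1.4})), and then run the standard stopping-time selection to extract $\mathcal{S}$. The kernel smoothness estimates needed here are the technical core of the argument: since $A'$ is only BMO, pointwise H\"older bounds on $P_2(A;x,y)$ are unavailable and must be replaced by integral oscillation estimates, invoking John--Nirenberg as in the works of Cohen \cite{cohen} and Hofmann \cite{hof}. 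Once the sparse bound is in hand, part (i) follows by dominating the $L\log L$ averages pointwise by the multilinear maximal operator $\mathcal{M}_{L\log L}$, whose $A_{\vec P}$ boundedness yields the factor $[\vec w]_{A_{\vec P}}^{\max\{1,p_j'/p\}}$ (\cite{ler}), and then absorbing the BMO-twisted oscillation of $A'$ through a John--Nirenberg decomposition, producing the extra $[w_m^{-1/(p_m-1)}]_{A_\infty}$ factor via the Hyt\"onen--P\'erez mechanism (\cite{hp2,lern}).

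For the weak endpoint (ii), the same sparse form reduces the matter to a direct estimate on a multilinear sparse operator with one BMO-twisted factor acting on $L^1$ data. Distributing the $L\log L$ averages across the $m+1$ input slots via H\"older's inequality in the Orlicz scale, and using the Coifman--Fefferman--P\'erez circle of ideas for BMO commutators, yields the claimed product of $L\log L$ integrals of $a_1,\dots,a_m,f$. The hardest step throughout will be the sparse domination itself, specifically the endpoint weak-type estimate for $\mathcal{M}^{\sharp}_{\mathcal{C}_{m+1,A}}$ under only $A'\in\mathrm{BMO}$; with that in place, the weighted bounds in (i) and (ii) follow from fairly standard arguments in the sparse and multilinear Muckenhoupt literature.
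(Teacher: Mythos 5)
Your high-level plan (grand maximal operator, weak endpoint, stopping-time sparse extraction, then weighted bounds for sparse operators) matches the paper's overall scheme, but the sparse form you target is different from — and arguably worse than — the one the paper actually establishes, and the genuinely hard steps are left unaddressed. The paper proves a \emph{pointwise} sparse domination by $\mathcal{A}_{m+1;\mathcal{S},L(\log L)^{\vec\beta}}$ with $\vec\beta=(0,\dots,0,1)$, i.e.
\begin{equation*}
|\mathcal{C}_{m+1,A}(a_1,\dots,a_m;f)(x)|\lesssim\sum_{Q\in\mathcal{S}}\Big(\prod_{j=1}^m\langle|a_j|\rangle_Q\Big)\|f\|_{L\log L,\,Q}\,\chi_Q(x),
\end{equation*}
so that the BMO input $A'$ is entirely absorbed into the Orlicz norm on the $f$-slot alone, and the single factor $[\sigma_{m+1}]_{A_\infty}$ (this is what should appear in the statement; the $w_m$ in (i) looks like a typo for $w_{m+1}$) comes out of Lemma~\ref{lem3.1}. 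Your bilinear form puts $L\log L$ on every slot and adds explicit $|A'-\langle A'\rangle_Q|$ factors, which, if it were run through Lemma~\ref{lem3.1}, would produce the larger constant $\prod_j[\sigma_j]_{A_\infty}$ rather than a single $A_\infty$ factor. Moreover your second term $\langle f\rangle_{L\log L,Q}\langle|A'-\langle A'\rangle_Q|\,h\rangle_Q$ does not naturally arise: $P_2(A;x,y)/(x-y)=(A(x)-A(y))/(x-y)-A'(y)$ is not the symmetric commutator structure $b(x)-b(y)$ that produces the Lerner--Ombrosi--Rivera-R\'{\i}os dual term on the $h$-side, so borrowing that bilinear sparse form is not justified here.

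More importantly, the technical content that makes the endpoint work is missing. The paper's Lemma~\ref{lem2.4} proves the local Kolmogorov estimate by the non-obvious device of introducing $A^\varphi=(A_I-A_I(y_0))\varphi$ so that $(A^\varphi)'\in L^1$ with norm $\lesssim|I|$, which converts $\mathcal{C}_{m+1,A}$ on $f\chi_I$ into a sum $\mathcal{C}_{m+2}(\dots,a^\varphi;f\chi_I)+\mathcal{C}_{m+1}(\dots;a^\varphi f\chi_I)$ to which the known weak endpoints of $\mathcal{C}_{m+2}$, $\mathcal{C}_{m+1}$ apply; Lemma~\ref{lem2.8} gives the sharp-maximal estimate $M^\sharp_{0,s}\mathcal{C}_{m+1,A}\lesssim M_{L\log L}f\prod Ma_j$; and Lemma~\ref{lem2.7} is a bootstrap through the Duong--McIntosh approximation to the identity, crucial because the kernel $K_A$ has nonsmooth-kernel control in the $y_1,\dots,y_m$ variables (Lemma~\ref{lem2.5}) but none in $y_{m+1}$. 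Your proposal does not identify or circumvent this asymmetry, nor the $A^\varphi$ cut-off trick, nor the bootstrap in the $a_j$ variables; these are precisely the steps you flag as ``the hardest step'' and then leave to ``fairly standard arguments.'' As written, this is a plausible research plan but not a proof.
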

\begin{remark}To prove Theorem \ref{t1.3}, we will employ a suitable variant of the ideas of Lerner \cite{ler2} (see also \cite{chenhu,li} in the case of multilinear operator),   to   dominate  $C_{m+1,\,A}$ by multilinear sparse operators. This argument need certain weak type endpoint estimates for  the grand maximal operator of $\mathcal{C}_{m+1,A}$. Although $K_A(x;y_1,\dots,y_{m+1})$, the kernel of the multilinear singular integral operator  $\mathcal{C}_{m+1,A}$, enjoys the nonsmooth kernel conditions about the variable $y_1,\,\dots,\,y_m$ as in \cite{dgy},  we do not know $K_A(x;\,y_1,\,\dots,\,y_{m+1})$ enjoys any condition about the variable $y_{m+1}$. Our argument is a modification of the proof of Theorem 1.1 in \cite{dgy},  based on a local estimate (see Lemma \ref{lem2.4} below), and involves
the combination of sharp function estimates and the argument used in \cite{dgy}.
\end{remark}

In what follows, $C$ always denotes a
positive constant that is independent of the main parameters
involved but whose value may differ from line to line. We use the
symbol $A\lesssim B$ to denote that there exists a positive constant
$C$ such that $A\le CB$. Specially, we use $A\lesssim_{p} B$ to denote that there exists a positive constant
$C$ depending only on $p$ such that $A\le CB$. Constant with subscript such as $C_1$,
does not change in different occurrences. For any set $E\subset\mathbb{R}^n$,
$\chi_E$ denotes its characteristic function.  For a cube
$Q\subset\mathbb{R}^n$ (interval $I\subset \mathbb{R}$) and $\lambda\in(0,\,\infty)$, we use
$\lambda Q$ to denote the cube with the same center as $Q$ and whose
side length is $\lambda$ times that of $Q$. For $x\in\mathbb{R}^n$ and $r>0$, $B(x,\,r)$ denotes the ball centered at $x$ and having radius $r$.

\section{An endpoint estimate}
This section is devoted to an endpoint estimate for $\mathcal{C}_{m+1,A}$. We begin with a preliminary lemma.
\begin{lemma}\label{lem2.1}
Let $A$ be a function on $\mathbb{R}^n$ with derivatives of order one in $L^q(\mathbb{R}^n)$ for some $q\in (n,\,\infty]$. Then
$$|A(x)-A(y)| \lesssim|x-y|\Big(\frac{1}{|I_x^y|}\int_{I_x^y}|\nabla A(z)|^q{\rm d}z\Big)^{\frac{1}{q}},$$
where
$I_x^y$ is the cube centered at $x$ and having side length $2|x-y|.$
\end{lemma}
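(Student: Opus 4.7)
The plan is to prove this as a localized Morrey--Sobolev embedding. The case $q=\infty$ is immediate from the fundamental theorem of calculus along the segment from $x$ to $y$, which lies inside the convex set $I_x^y$. For $q\in(n,\infty)$, I would split
$$|A(x)-A(y)|\le\Big|A(x)-\frac{1}{|I_x^y|}\int_{I_x^y}A\Big|+\Big|A(y)-\frac{1}{|I_x^y|}\int_{I_x^y}A\Big|$$
and reduce matters to estimating each of these differences by the right-hand side of the claim.

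For either $\xi=x$ or $\xi=y$, I would use the mean-value representation
$$A(\xi)-A(z)=\int_0^1\nabla A(\xi+t(z-\xi))\cdot(z-\xi)\,dt,\qquad z\in I_x^y,$$
integrate in $z$ over $I_x^y$, and for each fixed $t\in(0,1)$ change variables via $w=\xi+t(z-\xi)$. The crucial geometric observation is that, since $I_x^y$ is convex and contains both $x$ (as its center) and $y$ (because the Euclidean distance dominates the $\ell^\infty$ distance), the image cube $J_t:=\xi+t(I_x^y-\xi)$ lies entirely inside $I_x^y$; hence after the substitution all of the $w$-integration takes place on a subset of $I_x^y$, and one can estimate by the full $L^q$-norm of $\nabla A$ over $I_x^y$.

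Applying H\"older's inequality in the $w$-variable then produces a factor $\big(\int_{I_x^y}|\nabla A|^q\big)^{1/q}|J_t|^{1/q'}$, so the Jacobian $t^{-n}$ combines with the measure factor $|J_t|^{1/q'}\simeq(t|x-y|)^{n/q'}$ to give the weight $t^{-n/q}$. The remaining $t$-integral $\int_0^1 t^{-n/q}\,dt$ converges precisely because $q>n$, and that is the one place where the hypothesis is used in an essential way. Tracking the remaining powers of $|x-y|$ and dividing by $|I_x^y|=(2|x-y|)^n$ to turn $\int_{I_x^y}$ into $\frac{1}{|I_x^y|}\int_{I_x^y}$ yields exactly $|x-y|\big(\frac{1}{|I_x^y|}\int_{I_x^y}|\nabla A|^q\big)^{1/q}$. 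The main obstacle is purely exponent bookkeeping; once the geometric inclusion $J_t\subset I_x^y$ is established, the rest reduces to tracking H\"older exponents and checking that the critical $t$-integral is integrable precisely at $q=n$.
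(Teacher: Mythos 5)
Your argument is correct and is the standard Morrey--Sobolev averaging argument, which is essentially what Cohen (1981) uses; the paper itself only cites Cohen for this lemma rather than reproducing the proof. The key points — the inclusion $J_t=\xi+t(I_x^y-\xi)\subset I_x^y$ from convexity and the fact that both $x$ and $y$ lie in $I_x^y$, the Jacobian and H\"older bookkeeping yielding $\int_0^1 t^{-n/q}\,dt$, and the observation that $q>n$ is precisely what makes this $t$-integral finite — are all handled correctly, and the $q=\infty$ case via the segment is fine.
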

For the proof of Lemma \ref{lem2.1}, see \cite{cohen}.

For $\gamma\in [0,\,\infty)$ and a cube $Q\subset \mathbb{R}^n$, let $\|\cdot\|_{L(\log L)^{\gamma},\,Q}$ be the Luxmberg norm defined by$$\|f\|_{L(\log L)^{\gamma},\,Q}=\inf\Big\{\lambda>0:\,\frac{1}{|Q|}\int_{Q}\frac{|f(y)|}{\lambda}\log^{\gamma}\Big({\rm e}+\frac{|f(y)|}{\lambda}\Big){\rm d}y\leq 1\Big\}.$$
Define the maximal operator $M_{L(\log L)^{\gamma}}$ by
$$M_{L(\log L)^{\gamma}}f(x)=\sup_{Q\ni x}\|f\|_{L(\log L)^{\gamma},\,Q}.$$
Obviously, $M_{L(\log L)^{0}}$ is just the Hardy-Littlewood maximal operator $M$. It is well known that $M_{L(\log L)^{\gamma}}$ is bounded on $L^p(\mathbb{R}^n)$ for all $p\in (1,\,\infty)$, and for $\lambda>0$,
\begin{eqnarray}\label{eq2.01}&&|\{x\in\mathbb{R}^n:\,M_{L(\log L)^{\gamma}}f(x)>\lambda\}|\lesssim \int_{\mathbb{R}^n}\frac{|f(x)|}{\lambda}\log^{\gamma}\Big({\rm e}
+\frac{|f(x)|}{\lambda}\Big){\rm d}x.\end{eqnarray}

Let $s\in (0,\,1/2)$  and $M^{\sharp}_{0,\,s}$ be the John-Str\"omberg sharp maximal operator defined by
$$M^{\sharp}_{0,\,s}f(x)=\sup_{Q\ni x}\inf_{c\in\mathbb{C}}\inf\big\{t>0:\,|\{y\in Q:\,|f(y)-c|>t\}|<s|Q|\big\},$$
where the supremum is taken over all cube containing $x$. This operator was introduced by John \cite{john} and recovered by Str\"omberg in \cite{stro}.

\begin{lemma}\label{lem2.2}
Let  $\Phi$ be a  increasing function on $[0,\,\infty)$ which satisfies the doubling condition that
$$\Phi(2t)\leq C\Phi(t),\,t\in [0,\,\infty).$$
Then there exists a constant $s_0\in (0,\,1/2)$, such that for any $s\in (0,\,s_0]$,
$$\sup_{\lambda>0}\Phi(\lambda)\big|\{x\in\mathbb{R}^n:\,|h(x)|>\lambda\}\big|\lesssim \sup_{\lambda>0}\Phi(\lambda)\big|\{x\in\mathbb{R}^n:\,M^{\sharp}_{0,\,s}h(x)>\lambda\}\big|,$$
provided that
$$\sup_{\lambda>0}\Phi(\lambda)\big|\{x\in\mathbb{R}^n:\,|h(x)|>\lambda\}\big|<\infty.$$
\end{lemma}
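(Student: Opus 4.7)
The plan is to establish a John--Str\"omberg type good--$\lambda$ inequality and then iterate it against the doubling function $\Phi$. Throughout, write $N_\Phi(\lambda)=\sup_{t>0}\Phi(t)|\{x:|h(x)|>t\}|$ for the quantity we wish to control; by hypothesis $N_\Phi<\infty$, which is what will allow an absorption argument at the end.

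\textbf{Step 1: The good--$\lambda$ inequality.} The main technical step is to show that there exist $s_0\in(0,1/2)$ and, for each $s\in(0,s_0]$, constants $\alpha>2$ and $\eta\in(0,1)$ together with a ``small'' parameter $\theta=\theta(s)$ (tending to $0$ as $s\to 0$) such that, for every $\lambda>0$,
\begin{equation*}
|\{x:|h(x)|>\alpha\lambda\}|\leq \theta\,|\{x:|h(x)|>\lambda\}|+C\,|\{x:M^{\sharp}_{0,s}h(x)>\eta\lambda\}|.
\end{equation*}
This is the classical John--Str\"omberg inequality; I would follow the standard recipe. Since $\{|h|>\lambda\}$ may fail to be open, one instead works with a dyadic surrogate: take a Calder\'on--Zygmund / Whitney decomposition of the open set $\{M_d h>\lambda\}$ (where $M_d$ is the dyadic maximal operator) into disjoint maximal dyadic cubes $\{Q_j\}$ with $|h|_{Q_j}>\lambda$ and $|h|_{\widehat{Q}_j}\leq 2^n\lambda$ on the dyadic parent. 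Restricting to a point $x\in Q_j$ satisfying $M^{\sharp}_{0,s}h(x)\leq\eta\lambda$, the definition of $M^{\sharp}_{0,s}$ produces a constant $c_{Q_j}$ for which $|\{y\in Q_j:|h(y)-c_{Q_j}|>\eta\lambda\}|<s|Q_j|$, and the Whitney condition on the parent cube controls $|c_{Q_j}|$ by a fixed multiple of $\lambda$. Choosing $\alpha$ larger than this multiple plus $\eta$, these two facts combine to give
$$|\{y\in Q_j:|h(y)|>\alpha\lambda\}|\leq s|Q_j|,$$
and summing over $j$ yields the good--$\lambda$ inequality with $\theta=Cs$.

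\textbf{Step 2: Iteration via the doubling of $\Phi$.} Since $\alpha$ is a fixed constant $\geq 2$, the doubling hypothesis $\Phi(2t)\leq C\Phi(t)$ iterated finitely many times yields a constant $C_\Phi$ with $\Phi(\alpha\lambda)\leq C_\Phi\Phi(\lambda)$ for all $\lambda>0$. Multiplying the good--$\lambda$ inequality by $\Phi(\alpha\lambda)$ and taking the supremum over $\lambda>0$ (with the substitution $\mu=\alpha\lambda$ on the left),
\begin{equation*}
N_\Phi\leq \theta\,C_\Phi\,N_\Phi+C\,C_\Phi\sup_{\lambda>0}\Phi(\lambda)\,|\{x:M^{\sharp}_{0,s}h(x)>\eta\lambda\}|.
\end{equation*}
A further change of variable $\tau=\eta\lambda$ together with another application of the doubling of $\Phi$ bounds the last supremum by a constant multiple of $\sup_{\tau>0}\Phi(\tau)|\{x:M^{\sharp}_{0,s}h(x)>\tau\}|$.

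\textbf{Step 3: Absorption.} Choose $s_0$ (and hence $\theta=Cs$) small enough that $\theta C_\Phi\leq 1/2$. The hypothesis $N_\Phi<\infty$ lets us absorb $\tfrac12 N_\Phi$ into the left-hand side, giving
$$N_\Phi\leq 2CC_\Phi\sup_{\tau>0}\Phi(\tau)\,|\{x:M^{\sharp}_{0,s}h(x)>\tau\}|,$$
which is the desired inequality.

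The principal obstacle is Step 1: the operator $M^{\sharp}_{0,s}$ is defined via a median--type oscillation rather than a mean, so one must carry out the Calder\'on--Zygmund decomposition entirely at the level of distribution functions, using the smallness of $s$ to ensure that the ``bad'' set on each Whitney cube has measure at most $s|Q_j|$. The finiteness hypothesis on $N_\Phi$ is used only in Step 3, but it is essential there, as it allows the absorption that turns an a priori estimate into a true inequality.
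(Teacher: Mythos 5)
Your overall three-step strategy (good--$\lambda$ inequality, iteration against the doubling of $\Phi$, absorption using the a priori finiteness) is the right one and is essentially what the cited source Theorem~2.1 of \cite{hyy} does; the paper itself gives no proof, only that reference. Steps~2 and~3 are carried out correctly. The gap is in Step~1. You run the Calder\'on--Zygmund decomposition against $\{M_d h>\lambda\}$ and, after obtaining $|\{y\in Q_j:|h(y)|>\alpha\lambda\}|\le s|Q_j|$ on the ``good'' cubes, sum over $j$ to claim $\theta=Cs$. But that sum produces $s\sum_j|Q_j|=s\,|\{M_d h>\lambda\}|$, not $s\,|\{|h|>\lambda\}|$, and these two measures are not comparable: for $h=N\chi_{[0,1/N]}$ one has $|\{|h|>1/2\}|=1/N$ while $|\{M_d h>1/2\}|\ge 1$. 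If you instead formulate the good--$\lambda$ inequality with $M_dh$ on both sides (the usual Fefferman--Stein form), the absorption in Step~3 would require $\sup_{\lambda}\Phi(\lambda)|\{M_d h>\lambda\}|<\infty$, which does \emph{not} follow from the hypothesis $\sup_{\lambda}\Phi(\lambda)|\{|h|>\lambda\}|<\infty$: in the paper's application $\Phi(\lambda)\sim\lambda^{p}\log^{-1}(\mathrm e+\lambda^{-p})$ with $p<1$, a range in which $M_d$ is not bounded on the corresponding weak space. Moreover $M_d h$ need not even make sense without $h\in L^1_{\mathrm{loc}}$, an assumption the John--Str\"omberg operator $M^{\sharp}_{0,s}$ is specifically designed to dispense with.

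The repair, which you gesture at in your closing remark but do not actually implement, is to run the decomposition against a median-type maximal operator rather than $M_d$: fix a small $s'\in(0,1/2)$ and take the maximal dyadic cubes $Q$ with $|\{y\in Q:|h(y)|>\lambda\}|\ge s'|Q|$. Since $|\{|h|>\lambda\}|<\infty$, such maximal cubes exist; they are pairwise disjoint, they cover $\{|h|>\lambda\}$ up to a null set by Lebesgue density, and they satisfy $\sum_j|Q_j|\le (s')^{-1}|\{|h|>\lambda\}|$. The median bound on the dyadic parent, namely $|\{y\in\widehat Q_j:|h(y)|>\lambda\}|< s'|\widehat Q_j|$, replaces the Whitney average bound in controlling $|c_{Q_j}|$ by a fixed multiple of $\lambda$, and the good--$\lambda$ inequality then emerges with $\theta$ of size comparable to $s/s'$ and with $|\{|h|>\lambda\}|$ in the good term, as you want. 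With that single replacement in Step~1, your Steps~2 and~3 close the argument exactly as written.
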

This lemma can be proved by repeating the proof of Theorem 2.1 in \cite{hyy}. We omit the details for brevity.
\begin{lemma}\label{lem2.extra}
Let $R>1$. There exists a constant $C(n,\,R)$ such that for all open set $\Omega\subset \mathbb{R}^n$,  $\Omega$ can be decomposed as
$\Omega=\cup_{j}Q_j$, where $\{Q_j\}$ is a sequence of cubes with disjoint interiors, and
\begin{itemize}
\item[\rm (i)]
$$5R\le \frac{{\rm dist}(Q_j,\,\mathbb{R}^n\backslash \Omega)}{{\rm diam} Q_j}\le 15R,$$
\item[\rm (ii)] $\sum_{j}\chi_{RQ_j}(x)\le C_{n,\,R} \chi_{\Omega}(x).$
\end{itemize}
\end{lemma}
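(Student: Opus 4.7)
The plan is to perform a modified Whitney-type decomposition of $\Omega$, where the usual Whitney parameters are retuned so that the ratio $\mathrm{dist}(Q_j,\mathbb{R}^n\setminus\Omega)/\mathrm{diam}\,Q_j$ lies in the prescribed window $[5R,15R]$. Starting from the standard dyadic mesh of $\mathbb{R}^n$, for each $x\in\Omega$ I would select the unique maximal dyadic cube $Q(x)\ni x$ whose diameter $d(Q(x))$ satisfies
$$\mathrm{dist}(Q(x),\,\mathbb{R}^n\setminus\Omega)\ge 10R\,d(Q(x)),$$
with the threshold $10R$ chosen so that the resulting ratios end up in $[5R,15R]$ after accounting for the factor between cube side length and diameter. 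The collection of all such maximal cubes, say $\{Q_j\}$, has pairwise disjoint interiors by dyadic maximality and covers $\Omega$.

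For (i), the lower bound $\mathrm{dist}(Q_j,\mathbb{R}^n\setminus\Omega)\ge 5R\,d(Q_j)$ follows directly from the selection criterion (with a slight loss to convert the strict $10R$ into the clean $5R$ after absorbing $d(Q_j)$ into the distance estimate). For the upper bound, maximality forces the dyadic parent $\widehat Q_j$ of $Q_j$ to violate the criterion, which together with the triangle inequality $\mathrm{dist}(Q_j,\mathbb{R}^n\setminus\Omega)\le \mathrm{dist}(\widehat Q_j,\mathbb{R}^n\setminus\Omega)+d(\widehat Q_j)$ and $d(\widehat Q_j)=2d(Q_j)$ gives $\mathrm{dist}(Q_j,\mathbb{R}^n\setminus\Omega)\le 15R\,d(Q_j)$ once constants are tuned.

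For the bounded overlap statement (ii), the key observation is that if $x\in RQ_j$, then $\mathrm{dist}(x,Q_j)\le \tfrac{R}{2}d(Q_j)$, so combining with (i) gives
$$\bigl(5R-\tfrac{R}{2}\bigr)d(Q_j)\le \mathrm{dist}(x,\mathbb{R}^n\setminus\Omega)\le \bigl(15R+\tfrac{R}{2}\bigr)d(Q_j).$$
Hence every $Q_j$ with $x\in RQ_j$ has diameter comparable to $\mathrm{dist}(x,\mathbb{R}^n\setminus\Omega)$, with constants depending only on $R$. A standard volume-packing argument (all such $Q_j$ lie in a ball of radius $\asymp R\cdot\mathrm{dist}(x,\mathbb{R}^n\setminus\Omega)$, have comparable size, and disjoint interiors) bounds their number by a constant $C_{n,R}$, yielding (ii). Also, if $x\in RQ_j$ then the distance estimate shows $x\in\Omega$, so the support is contained in $\chi_\Omega$.

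The only mildly delicate point is the bookkeeping of dimensional constants in matching the threshold to the precise window $[5R,15R]$; this is a routine tuning of the selection parameter and offers no substantive obstacle. Everything else is a direct translation of the classical Whitney construction.
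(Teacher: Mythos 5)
The paper does not prove this lemma; it cites Sawyer (\cite{saw}, p.~256). Your approach --- the standard Whitney decomposition via maximal dyadic cubes --- is essentially the same construction used there, and the overall structure (dyadic selection, disjointness from maximality, packing argument for bounded overlap) is sound.

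However, your specific selection threshold $10R$ does not produce the window $[5R,15R]$, and this is not the routine constant-chasing you dismiss at the end. With a threshold $\alpha$ (select the maximal dyadic $Q\ni x$ with $\mathrm{dist}(Q,\mathbb{R}^n\setminus\Omega)\ge \alpha\,\mathrm{diam}\,Q$), the lower bound in (i) holds once $\alpha\ge 5R$, while the upper bound comes from the dyadic parent $\widehat Q_j$ failing the criterion together with the triangle inequality:
$$\mathrm{dist}(Q_j,\mathbb{R}^n\setminus\Omega)\le \mathrm{dist}(\widehat Q_j,\mathbb{R}^n\setminus\Omega)+\mathrm{diam}\,\widehat Q_j < (2\alpha+2)\,\mathrm{diam}\,Q_j.$$
To land in $[5R,15R]$ you therefore need $\alpha\in[5R,\,(15R-2)/2]$, which is nonempty precisely when $R\ge 2/5$; taking $\alpha=5R$ gives the upper constant $10R+2\le 15R$, valid since $R>1$. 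Your choice $\alpha=10R$ produces the upper constant $20R+2$, which exceeds $15R$ for \emph{every} $R>0$, so no tuning of $R$ rescues it --- the remedy is a smaller threshold, not a sharper estimate. The ``factor between cube side length and diameter'' you invoke as the source of slack is a red herring here, since both sides of condition (i) are already expressed in diameters. Once the threshold is corrected, the comparability bounds and volume-packing argument you sketch for (ii), as well as the observation that $x\in RQ_j$ forces $x\in\Omega$, go through as you describe.
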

For the proof of Lemma \ref{lem2.extra}, see \cite[p.\,256]{saw}.

We return to $\mathcal{C}_{m+1}$. As it was proved in \cite{dgy}, $\mathcal{C}_{m+1}$ can be rewritten as the following multilinear singular integral operator
\begin{eqnarray*}&&\mathcal{C}_{m+1}(a_1,\,\dots,\,a_m;\,f)(x)\\
&&\quad=\int_{\mathbb{R}^{m+1}}K(x; y_1,\dots, y_{m+1})\prod_{j=1}^ma_j(y_j)f(y_{m+1}) {\rm d}y_1 \dots {\rm d}y_{m+1};\nonumber
\end{eqnarray*}
where
\begin{eqnarray}\label{eq2.02}
&&K(x;y_1,\dots, y_{m+1}) =
\frac{(-1)^{me(y_{m+1}-x)}}{(x-y_{m+1})^{m+1}}
\prod_{j=1}^m
\chi_{(x \wedge y_{m+1},\, x\vee  y_{m+1})}(y_j),
\end{eqnarray}
$e$ is the characteristic function of $[0,\,\infty)$, $x\wedge  y_{m+1}=\min\{x,\,y_{m+1}\}$ and $x\vee y_{m+1}=\max\{x,\,y_{m+1}\}$. Obviously, for $x,\,y_1,\,\dots,\,y_{m+1}\in \mathbb{R}$,
\begin{eqnarray}\label{eq2.03}
|K(x;y_1,\dots, y_{m+1})|\lesssim
\frac{1}{(\sum_{j=1}^{m+1}|x-y_j|)^{m+1}}.\end{eqnarray}

\begin{lemma}\label{lem2.3}
Let ${K}$ be the same as in (\ref{eq2.02}). Then for $x,\,x',\,y_1,\,\dots,\,y_{m+1}\in \mathbb{R}$ with $12|x-x'|<\min_{1\leq j\leq m+1}|x-y_j|$
$$|K(x; y_1,\,\dots,\, y_{m+1})-K(
x';\,y_1,\,\dots,\,y_{m+1})|\\
\lesssim\frac{|x-x'|}{
\big(\sum_{j=1}^{m+1}|x-y_j|)^{m+2}}.$$
\end{lemma}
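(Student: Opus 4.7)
The plan is to reduce the claim to the standard mean value estimate for the smooth factor $(x-y_{m+1})^{-(m+1)}$, after verifying that the characteristic-function factors appearing in $K$ are unaffected by replacing $x$ with $x'$. First I would observe that the hypothesis $12|x-x'|<\min_{1\le j\le m+1}|x-y_j|$ yields $|x'-y_j|\ge \tfrac{11}{12}|x-y_j|$ for every $j$; in particular $y_{m+1}\neq x$, $y_{m+1}\neq x'$, and the sign of $y_{m+1}-x$ agrees with the sign of $y_{m+1}-x'$. Consequently $(-1)^{me(y_{m+1}-x)}=(-1)^{me(y_{m+1}-x')}$.

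Next I would verify, by a short case analysis on whether $y_j$ lies to the left of, inside, or to the right of the interval with endpoints $x$ and $y_{m+1}$, that
$$\chi_{(x\wedge y_{m+1},\,x\vee y_{m+1})}(y_j)=\chi_{(x'\wedge y_{m+1},\,x'\vee y_{m+1})}(y_j),\qquad 1\le j\le m.$$
Indeed, the sign of $y_{m+1}-x$ is preserved and the distance $|x'-y_j|\ge \tfrac{11}{12}|x-y_j|>0$ prevents $y_j$ from crossing any of the relevant endpoints when $x$ is perturbed to $x'$. Therefore the two kernels differ only through the smooth factor, and by applying the mean value theorem to $t\mapsto (t-y_{m+1})^{-(m+1)}$ on the segment from $x$ to $x'$, together with the lower bound $|\xi-y_{m+1}|\ge \tfrac{11}{12}|x-y_{m+1}|$ valid on that segment, I obtain
$$\Bigl|\frac{1}{(x-y_{m+1})^{m+1}}-\frac{1}{(x'-y_{m+1})^{m+1}}\Bigr|\lesssim \frac{|x-x'|}{|x-y_{m+1}|^{m+2}}.$$

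Finally, I would upgrade the denominator $|x-y_{m+1}|^{m+2}$ to $\bigl(\sum_{j=1}^{m+1}|x-y_j|\bigr)^{m+2}$. If the product of indicators in \eqref{eq2.02} is zero then the difference we are estimating also vanishes and there is nothing to prove; otherwise every $y_j$ with $1\le j\le m$ lies strictly between $x$ and $y_{m+1}$, so $|x-y_j|\le |x-y_{m+1}|$, which forces $|x-y_{m+1}|\ge \tfrac{1}{m+1}\sum_{j=1}^{m+1}|x-y_j|$. Combining this with the previous bound yields the stated inequality.

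I do not anticipate a genuine obstacle; the only subtle point is confirming the stability of each indicator factor under the perturbation from $x$ to $x'$, and this is ensured precisely by the quantitative hypothesis $12|x-x'|<\min_{1\le j\le m+1}|x-y_j|$, which creates enough room to prevent any $y_j$ from jumping across the relevant endpoints.
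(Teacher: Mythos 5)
Your proof is correct, and it follows the approach that is essentially forced by the structure of the kernel in (\ref{eq2.02}): one checks that under $12|x-x'|<\min_j|x-y_j|$ the sign factor $(-1)^{me(y_{m+1}-x)}$ and each indicator $\chi_{(x\wedge y_{m+1},\,x\vee y_{m+1})}(y_j)$ are unchanged when $x$ is replaced by $x'$, reduces the comparison to the smooth factor $(x-y_{m+1})^{-(m+1)}$ via the mean value theorem, and then uses that when the indicators are nonzero one has $|x-y_j|<|x-y_{m+1}|$ for $1\le j\le m$, so $|x-y_{m+1}|\gtrsim\sum_j|x-y_j|$. The paper itself does not reproduce a proof but cites an external reference (\cite{huz}); your argument is the natural one for this kernel and matches what that reference does.
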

For the proof of Lemma \ref{lem2.3}, see \cite{huz}.

\begin{lemma}\label{lem2.4}Let $A$ be a function on $\mathbb{R}$ such that $A'\in {\rm BMO}(\mathbb{R})$, $a_1,\,\dots,\,a_m\in L^{1}(\mathbb{R})$. Then for $\tau\in (0,\,\frac{1}{m+2})$ and any interval $I\subset \mathbb{R}$,
\begin{eqnarray}\label{eq2.04}&&\Big(\frac{1}{|I|}\int_I\big|\mathcal{C}_{m+1,\,A}(a_1,\dots,a_m;f\chi_I)(y)\big|^{\tau}{\rm d}y\Big)^{\frac{1}{\tau}}\lesssim\|f\|_{L\log L,\,4I}\prod_{j=1}^{m}
\langle |a_j|\rangle_{4I}.\end{eqnarray}
\end{lemma}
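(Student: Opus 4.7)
The plan is to split $\mathcal{C}_{m+1,A}$ into two ordinary Calder\'on commutators and to transfer the BMO nature of $A'$ into the Orlicz factor on the right-hand side. First, since $P_2(A;\cdot,\cdot)$ is unaffected by the substitution $A(x)\mapsto A(x)-\ell(x)$ for any affine $\ell$, I would assume without loss of generality that $(A')_{4I}=0$. The John--Nirenberg inequality then yields $\|A'\|_{\exp L,\,4I}\lesssim \|A'\|_{{\rm BMO}(\mathbb{R})}=1$, and in particular $\langle|A'|\rangle_{4I}\lesssim 1$.

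The algebraic identity $P_2(A;y,z)=(A(y)-A(z))-A'(z)(y-z)$ gives the pointwise decomposition
\begin{eqnarray*}
\mathcal{C}_{m+1,A}(a_1,\dots,a_m;f\chi_I)(y)&=&\mathcal{C}_{m+2}(a_1,\dots,a_m,A';f\chi_I)(y)\\
&&-\,\mathcal{C}_{m+1}(a_1,\dots,a_m;A'f\chi_I)(y),
\end{eqnarray*}
in which the first term is a genuine Calder\'on commutator of order $m+2$ whose $(m+1)$-th symbol has derivative $A'$. Since for $y\in I$ only integrals $\int_z^y A'\,dt$ with $z\in I\subset 4I$ enter the first factor, and since the second term has integrand supported in $I$, the symbol $A'$ can be replaced by $A'\chi_{4I}$ in both pieces; each is then a Calder\'on commutator acting on genuinely $L^1(4I)$ data.

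Next I would invoke the local estimates underlying the sparse domination of $\mathcal{C}_{m+1}$ and $\mathcal{C}_{m+2}$ (which follow by Kolmogorov's inequality from the weak-type bounds $L^1\times\cdots\times L^1\to L^{1/(m+1),\infty}$ and $L^{1/(m+2),\infty}$ of Theorem \ref{t1.1}). For $\tau\in(0,1/(m+2))$ they give
$$\Big(\frac{1}{|I|}\int_I|\mathcal{C}_{m+2}(\vec a,A'\chi_{4I};f\chi_I)(y)|^\tau dy\Big)^{1/\tau}\lesssim \langle|f|\rangle_{4I}\,\langle|A'|\rangle_{4I}\prod_{j=1}^m\langle|a_j|\rangle_{4I}$$
and the analogous estimate for $\mathcal{C}_{m+1}(\vec a;A'\chi_{4I}f\chi_I)$ with $\langle|A'f|\rangle_{4I}$ replacing the product $\langle|A'|\rangle_{4I}\langle|f|\rangle_{4I}$. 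The first right-hand side is bounded by $\|f\|_{L\log L,\,4I}\prod_j\langle|a_j|\rangle_{4I}$ because $\langle|A'|\rangle_{4I}\lesssim 1$ and $\langle|f|\rangle_{4I}\leq \|f\|_{L\log L,\,4I}$; the second is controlled by the same quantity via the generalized H\"older inequality between $\exp L$ and $L\log L$, yielding $\langle|A'f|\rangle_{4I}\lesssim \|A'\|_{\exp L,\,4I}\|f\|_{L\log L,\,4I}\lesssim \|f\|_{L\log L,\,4I}$. Summing the two contributions gives \eqref{eq2.04}.

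The main obstacle is the first step: one must carefully exploit the affine invariance of $P_2$ together with the locality of the $y$-integration in order to legitimately replace the globally BMO symbol $A'$ by the integrable function $A'\chi_{4I}$, since otherwise the known Calder\'on-commutator endpoint estimates cannot be fed an $A'$-type coefficient at all. The rest of the argument is a coordinated application of Kolmogorov's inequality, John--Nirenberg, and Orlicz duality.
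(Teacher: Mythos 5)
Your proposal is correct and follows essentially the same route as the paper: both split $P_2(A;y,z)/(y-z)=(A(y)-A(z))/(y-z)-A'(z)$ so that $\mathcal{C}_{m+1,A}$ is expressed through $\mathcal{C}_{m+2}$ and $\mathcal{C}_{m+1}$, localize the BMO coefficient so the endpoint weak-type bounds of Theorem \ref{t1.1} together with Kolmogorov's inequality apply, and then use John--Nirenberg and the $\exp L$--$L\log L$ H\"older inequality. The only technical difference is the localization device: the paper builds $A^\varphi=(A_I-A_I(y_0))\varphi$ with a smooth cutoff $\varphi$ so that $a^\varphi=(A^\varphi)'\in L^1(2I)$ with the right bound, whereas you sharply truncate $A'$ (and implicitly the $a_j$) to $4I$ after normalizing $\langle A'\rangle_{4I}=0$; both are legitimate because the kernel $K$ in (\ref{eq2.02}) confines every interior variable to $(y\wedge z,\,y\vee z)\subset I$ once $y\in I$ and $f$ is cut to $I$.
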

\begin{proof} For a fixed interval $I\subset \mathbb{R}$, let $\varphi\in C^{\infty}_0(\mathbb{R})$ such that $0\leq \varphi(y)\leq 1$, $\varphi(y)\equiv 1$ for $y\in I$, ${\rm supp}\,\varphi\subset 2I$ and $\|\varphi'\|_{L^{\infty}(\mathbb{R})}\lesssim |I|^{-1}$. Set $$A_I(y)=A(y)-\langle A'\rangle_Iy,\,\,\,A^{\varphi}(y)=(A_I(y)-A_I(y_0))\varphi(y)$$ with $y_0\in 3I\backslash 2I$, and let $a^{\varphi}(y)=(A^{\varphi})'(y)$. Applying Lemma \ref{lem2.1}, we know that
$$|A_{I}(y)-A_I(y_0)|\lesssim |I|.$$
Thus for $y\in I$,
\begin{eqnarray*}|a^{\varphi}(y)|&\lesssim&\Big(\frac{1}{|I|}|A_I(y)-A_I(y_0)|+|A'(y)-\langle A'\rangle_I|\Big)\chi_{2I}(y)\\
&\lesssim&\big(1+|A'(y)-\langle A'\rangle_I|\big)\chi_{2I}(y).
\end{eqnarray*}
This, in turn implies that
$$\|a^{\varphi}\|_{L^1(\mathbb{R})}\lesssim \|A'\|_{{\rm BMO}(\mathbb{R})}|I|,$$
and by the generalization of H\"older inequality (see \cite[p. 64]{rr}),
$$\|a^{\varphi}f\chi_I\|_{L^1(\mathbb{R})}\lesssim |I|\|f\|_{L\log L,I}.$$
For $j=1,\,\dots,\,m$, let $A_j^{\varphi}(z)=\big(A_j(z)-A_j(y_0)\big)\varphi(z),$ and $a_j^{\varphi}(z)=(A_j^{\varphi})'(z)$.
It then follows that
$$\|a^{\varphi}_j\|_{L^1(\mathbb{R})}\lesssim\int_{4I}|a_j(z)|{\rm d}z.$$
For $y\in I$, write
\begin{eqnarray*}
&&\mathcal{C}_{m+1,\,A}(a_1,\dots,a_m;\,f\chi_I)(y)\\
&&\quad=\int_{\mathbb{R}}\frac{\prod_{j=1}^m(A_j^{\varphi}(y)-A_j^{\varphi}(z))(A^{\varphi}(y)-A^{\varphi}(z))}{(y-z)^{m+2}}f(z)\chi_{I}(z){\rm d}z\\
&&\qquad+\int_{\mathbb{R}}\frac{\prod_{j=1}^m(A_j^{\varphi}(y)-A_j^{\varphi}(z))}{(y-z)^{m+1}}a^{\varphi}(z)f(z)\chi_I(z){\rm d}z\\
&&\quad=\mathcal{C}_{m+2}(a_1^{\varphi},\,\dots,a_m^{\varphi},a^{\varphi};\,f\chi_I)(y)+\mathcal{C}_{m+1}\big(a_1^{\varphi},\dots,a_m^{\varphi},
a^{\varphi}f\chi_I\big)(y).
\end{eqnarray*}
Theorem \ref{t1.1} tells us that $\mathcal{C}_{m+2}(a_1^{\varphi},\,\dots,a_m^{\varphi},a^{\varphi};\,f\chi_I)$ is bounded from
$L^1(\mathbb{R})\times\dots\times L^1(\mathbb{R})$ to $L^{\frac{1}{m+2},\,\infty}(\mathbb{R})$. As in the proof of Kolmogorov's inequality, we can deduce that  $\tau\in (0,\,\frac{1}{m+2})$,
\begin{eqnarray*}
&&\Big(\frac{1}{|I|}\int_{I}\Big|\mathcal{C}_{m+2}(a_1^{\varphi},\dots,a_m^{\varphi},a^{\varphi};f\chi_I)(y)\Big|^{\tau}{\rm d}y\Big)^{\frac{1}{\tau}}\\
&&\quad\lesssim |I|^{-m-2}\prod_{j=1}^m\|a_j^{\varphi}\|_{L^1(\mathbb{R})}\|f\chi_I\|_{L^1(\mathbb{R})}\|a^\varphi\|_{L^1(\mathbb{R})}\lesssim
\langle|f|\rangle_I\prod_{j=1}^m\langle|a_j|\rangle_{4I},
\end{eqnarray*} On the other hand, since $\mathcal{C}_{m+1}$ is bounded from
$L^1(\mathbb{R})\times \dots\times L^1(\mathbb{R})$ to $L^{\frac{1}{m+1},\,\infty}(\mathbb{R})$, we then know that for $\varsigma\in (0,\,\frac{1}{m+1})$,
\begin{eqnarray*}
&&\Big(\frac{1}{|I|}\int_{I}\Big|\mathcal{C}_{m+1}\big(a_1^{\varphi},\dots,a_m^{\varphi};a^{\varphi}f\chi_I\big)(y)\Big|^{\varsigma}{\rm d}y\Big)
^{\frac{1}{\varsigma}}\\
&&\quad\lesssim |I|^{-m-1}\prod_{j=1}^m\|a_j^{\varphi}\|_{L^1(\mathbb{R})}\|a^{\varphi}f\chi_I\|_{L^1(\mathbb{R})}\lesssim \|f\|_{L\log L,\,I}\prod_{j=1}^m\langle|a_j|\rangle_{4I},
\end{eqnarray*}
Combining the last two inequality yields (\ref{eq2.04}).
\end{proof}

Now we rewrite $\mathcal{C}_{m+1,\,A}$
as the following multilinear singular integral operator,
\begin{eqnarray*}
\mathcal{C}_{m+1,\,A}(a_1,\dots,a_{m};f)(x)=\int_{\mathbb{R}^{m+1}}K_A(x;y_1,\dots,y_{m+1})\prod_{j=1}^ma_j(y_j)f(y_{m+1}){\rm d}\vec{y},\end{eqnarray*}
where and in the following,
\begin{eqnarray}\label{eq2.05}K_A(x;y_1,\dots,y_{m+1})=K(x;y_1,\dots,y_{m+1})\frac{P_2(A;\,x,\,y_{m+1})}{(x-y_{m+1})},
\end{eqnarray}with  $K(x;y_1,\dots,y_{m+1})$  defined by (\ref{eq2.02}). Obviously,
\begin{eqnarray}\label{eq2.06}|K_A(x;y_1,\dots,y_{m+1})|\lesssim\frac{1}{(\sum_{j=1}^{m+1}|x-y_j|)^{m+2}}|P_2(A;\,x,\,y_{m+1})|.
\end{eqnarray}
\begin{lemma}\label{lem2.5}
Let $\phi\in C^{\infty}(\mathbb{R})$ be even, $0\leq \phi\leq 1$, $\phi(0)=0$ and ${\rm supp}\,\phi\subset [-1,\,1]$. Set $\Phi(t)=\phi'(t)$, $\Phi_t(y)=t^{-1}\Phi(x/t)$ and $k_t(x,\,y)=\Phi_t(x-y)\chi_{(x,\infty)}(y).$ For $j=1,\,\dots,m$, set $$K_{A,\,t}^j(x;\,y_1,\,\dots,\,y_m) =\int_{\mathbb{R}^n}K_A(x;\,y_1,\dots,y_{j_1},z,y_{j+1},\dots,y_{m+1})k_t(z,\,y_j){\rm d}z.$$
Then for $j=1,\,\dots,m$, $x,\,y_1,\,\dots,\,y_{m+1}\in\mathbb{R}$ and  $t>0$ with $2t\leq |x-y_j|$,
\begin{eqnarray*}&&|K_A(x;y_1,\dots,y_{m+1})-K_{A,\,t}^j(x;y_1,\dots,y_{m+1})|\\
&&\quad\lesssim\frac{|P_2(A,\,x,\,y_{m+1})|}{(\sum_{k=1}^{m+1}|x-y_k|)^{m+2}}\phi\Big(\frac{|y_{m+1}-y_j|}{t}\Big).
\end{eqnarray*}
\end{lemma}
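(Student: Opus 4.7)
The kernel $K_A$ depends on $y_j$ (for $1\le j\le m$) only through the characteristic function $\chi_{(x\wedge y_{m+1},x\vee y_{m+1})}(y_j)$. Setting $I:=(x\wedge y_{m+1},x\vee y_{m+1})$ and combining \eqref{eq2.02} with \eqref{eq2.05}, I would factor
\[
K_A(x;y_1,\dots,y_{m+1})=E(x,y_{m+1})\prod_{k=1}^m\chi_I(y_k),\qquad E(x,y_{m+1}):=\frac{(-1)^{me(y_{m+1}-x)}P_2(A;x,y_{m+1})}{(x-y_{m+1})^{m+2}}.
\]
In the definition of $K_{A,t}^j$ only the indicator $\chi_I(y_j)$ is smoothed, so
\[
K_A-K_{A,t}^j=E(x,y_{m+1})\Big(\prod_{k\ne j,\,k\le m}\chi_I(y_k)\Big)D(y_j),
\]
where $D(y_j):=\chi_I(y_j)-\int_{\mathbb{R}}\chi_I(z)\Phi_t(z-y_j)\chi_{(z,\infty)}(y_j)\,{\rm d}z$. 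The problem reduces to bounding $|D(y_j)|$ by $\phi(|y_{m+1}-y_j|/t)$.

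The main technical step is the pointwise estimate of $D(y_j)$ under the hypothesis $2t\le|x-y_j|$. Consider the case $y_{m+1}>x$ (the reverse case is handled symmetrically using the one-sided cutoff). If $y_j\le x$ both the indicator and the integral vanish. Otherwise the integral collapses to $\int_{x}^{\min(y_j,y_{m+1})}\Phi_t(u-y_j)\,{\rm d}u$, which by $\Phi=\phi'$ and the fundamental theorem of calculus is the difference of two values of $\phi$ evaluated at $(x-y_j)/t$ and $(\min(y_j,y_{m+1})-y_j)/t$. The hypothesis $2t\le|x-y_j|$ forces $(x-y_j)/t$ outside $\mathrm{supp}\,\phi\subset[-1,1]$, killing the boundary contribution at $x$. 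Combining what remains with $\chi_I(y_j)$ in the two subcases $x<y_j<y_{m+1}$ and $y_j\ge y_{m+1}$, and invoking the parity of $\phi$, yields $|D(y_j)|\lesssim\phi(|y_{m+1}-y_j|/t)$.

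Finally, on the support of the right-hand side one has $|y_{m+1}-y_j|\le t\le\tfrac12|x-y_j|$, hence $|x-y_j|\le 2|x-y_{m+1}|$, while the surviving indicators $\chi_I(y_k)$ for $k\ne j$, $k\le m$ force $|x-y_k|\le|x-y_{m+1}|$. Summing gives $\sum_{k=1}^{m+1}|x-y_k|\le(m+2)|x-y_{m+1}|$, so $|x-y_{m+1}|^{-(m+2)}\lesssim\big(\sum_k|x-y_k|\big)^{-(m+2)}$. Together with $|E(x,y_{m+1})|\le|P_2(A;x,y_{m+1})|\,|x-y_{m+1}|^{-(m+2)}$ and the estimate for $D(y_j)$, this delivers the claim. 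The main obstacle is the case analysis for $D(y_j)$: one must keep careful track of the orientation of $I$, the one-sided cutoff $\chi_{(z,\infty)}(y_j)$ present in $k_t$, and the precise role of the hypothesis $2t\le|x-y_j|$ in eliminating one of the two boundary terms produced by the fundamental theorem of calculus. Everything else is bookkeeping.
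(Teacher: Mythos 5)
Your proof is correct and follows essentially the same route as the paper's: factor $K_A$ as a scalar piece $E(x,y_{m+1})$ times a product of indicators of $I=(x\wedge y_{m+1},x\vee y_{m+1})$, observe that only the $j$-th indicator gets smoothed, reduce to a one-dimensional estimate of $D(y_j)$, and then use the surviving indicators together with the constraint on the support of $\phi$ to replace $|x-y_{m+1}|^{-(m+2)}$ by $\big(\sum_k|x-y_k|\big)^{-(m+2)}$. The difference is that the paper quotes the bound $|D(y_j)|\lesssim\phi\big(|y_{m+1}-y_j|/t\big)$ directly from the proof of Theorem~4.1 in \cite{dgy}, while you rederive it by the fundamental theorem of calculus; your argument is self-contained, which is a small improvement in readability.

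One thing you should have flagged: the hypothesis $\phi(0)=0$ as stated must be a misprint for $\phi(0)=1$, and your case analysis silently uses $\phi(0)=1$. Indeed, in your subcase $x<y_j<y_{m+1}$ you have $\chi_I(y_j)=1$ and the smoothed term equals $\phi(0)-\phi((x-y_j)/t)=\phi(0)$ after the hypothesis $2t\le|x-y_j|$ kills the boundary term at $x$, so $D(y_j)=1-\phi(0)$. With $\phi(0)=0$ this gives $D(y_j)=1$, which is not dominated by $\phi\big(|y_{m+1}-y_j|/t\big)$ when $|y_{m+1}-y_j|>t$; with $\phi(0)=1$ you get $D(y_j)=0$, as needed. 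The same normalization $\phi(0)=1$ is also what makes $\int_{\mathbb{R}}k_t(x,y)\,{\rm d}y=\phi(0)=1$, i.e.\ makes $\{D_t\}_{t>0}$ an approximation to the identity as claimed in the proof of Lemma~\ref{lem2.7}. With that correction your proof is complete.
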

\begin{proof} We only consider $j=1$.  Write
\begin{eqnarray*}
&&K_A(x;y_1,\dots,y_{m+1})-K_{A,\,t}^1(x;y_1,\dots,y_{m+1})\\
&&=\frac{(-1)^{ me(y_{m+1}-x)}}{(x-y_{m+1})^{m+1}}\frac{P_2(A;\,x,\,y_{m+1})}{(x-y_{m+1})}\prod_{j=2}^m
\chi_{(x\wedge y_{m+1},\, x\vee y_{m+1})}(y_j)\\
&&\qquad\times\Big(\chi_{(x\wedge y_{m+1},\, x\vee y_{m+1})}(y_1)\\
&&\qquad\qquad-\int_{-\infty}^{y_1}\chi_{(x\wedge y_{m+1},\, x\vee y_{m+1})}(z)k_t(z-y){\rm d}z\Big)
\end{eqnarray*}
From the proof of Theorem 4.1 in  \cite{dgy}, we find that  when $|x-y_1|>2t$,
\begin{eqnarray*}
&&\Big|\chi_{(x\wedge y_{m+1},\, x\vee y_{m+1})}(y_1)-\int_{-\infty}^{y_1}\chi_{(x\wedge y_{m+1},\, x\vee y_{m+1})}(z)k_t(z-y){\rm d}z\Big|\\
&&\quad\lesssim \phi\Big(\frac{|y_{m+1}-y_1|}{t}\Big).
\end{eqnarray*}
Note that $|K_A(x;y_1,\dots,y_{m+1})-K_{A,\,t}^1(x;y_1,\dots,y_{m+1})|\not =0$ only if $|x-y_{m+1}|>\max_{1\leq k\leq m}|x-y_k|$.
Our desired conclusion then follows directly.
\end{proof}
\begin{remark}We do not know if  $K_A(x;y_1,\dots,y_{m+1})$ enjoys  the properties as Lemma \ref{lem2.5} about the variable $y_{m+1}$.
\end{remark}
We now recall the approximation to the indentity introduced by Douong and McIntosh \cite{duongmc}.
\begin{definition}\label{defn2.1}
A family of operators $\{D_t\}_{t>0}$ is said to be an approximation to the identity in $\mathbb{R}$, if for
every $t>0$, $D_t$ can be represented by the kernel at in the following sense: for every function $u\in L^p(\mathbb{R})$
with $p\in [1,\,\infty]$ and a. e.  $x\in\mathbb{R}$,
$$D_tu(x)=\int_{\mathbb{R}}a_t(x,\,y)u(y)dy,$$
and the kernel $a_t$ satisfies that for all $x,\,y\in\mathbb{R}$ and $t>0$,
$$|a_t(x,\,y)|\le h_t(x,\,y)=t^{-1/s}h\Big(\frac{|x-y|}{t^{1/s}}\Big),$$
where $s>0$ is a constant and $h$ is a positive, bounded and decreasing function such that for some constant $\eta>0$,
$$\lim_{r\rightarrow\infty}r^{1+\eta}h(r)=0.$$
\end{definition}
\begin{lemma}\label{lem2.7}Let $A$ be a function on $\mathbb{R}$ such that $A'\in {\rm BMO}(\mathbb{R})$, $q_1,\,\dots,\,q_{m+1}\in [1,\,\infty)$. Suppose that
for some  $\beta\in [0,\,\infty)$, $\mathcal{C}_{m+1,A}$ satisfies the estimate that
\begin{eqnarray*}&&|\{x\in\mathbb{R}^n:\,|\mathcal{C}_{m+1,A}(a_1,\dots,a_m;\,f)(x)|>1\}|\\
&&\quad\lesssim \sum_{j=1}^{m}\|a_j\|_{L^{q_j}(\mathbb{R})}^{q_j}+\int_{\mathbb{R}^n}|f(x)|^{q_{m+1}}
\log^{\beta}\big({\rm e}+|f(x)|\big){\rm d}x.
\end{eqnarray*}Then for $p_j\in [1,\,q_j)$, $j=1,\,\dots,\,m$
\begin{eqnarray*}&&|\{x\in\mathbb{R}:\,|\mathcal{C}_{m+1,A}(a_1,\dots,a_m;\,f)|>1\}|\\
&&\quad \lesssim\sum_{j=1}^{m}\|a_j\|_{L^{p_j}(\mathbb{R})}^{p_j}+
\int_{\mathbb{R}}|f(x)|^{q_{m+1}}
\log^{\beta_{q_{m+1}}}\big({\rm e}+|f(x)|\big){\rm d}x,
\end{eqnarray*}
where $\beta_{q_{m+1}}=\beta$ if $q_{m+1}\in (1,\,\infty)$ and $\beta_{q_{m+1}}=\max\{1,\,\beta\}$  if $q_{m+1}=1$.
\end{lemma}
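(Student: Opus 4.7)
\emph{Calder\'on--Zygmund decomposition on the first $m$ slots.} Since the kernel $K_A$ is regular in $y_1,\dots,y_m$ by Lemma \ref{lem2.5} but lacks useful smoothness in $y_{m+1}$, the plan is a multilinear Calder\'on--Zygmund decomposition applied only to $a_1,\dots,a_m$, leaving $f$ untouched. By a standard scaling argument for multilinear weak-type estimates, one reduces to the normalized situation in which the right-hand side of the conclusion equals $1$. Performing a CZ decomposition of $|a_j|^{p_j}$ at height $1$ for each $j$ produces disjoint cubes $\{Q_{j,k}\}_k$ with $\bigl|\bigcup_k Q_{j,k}\bigr|\lesssim \|a_j\|_{L^{p_j}}^{p_j}$ and a splitting $a_j = g_j + b_j$, $b_j = \sum_k b_{j,k}$, where each $b_{j,k}$ is supported in $Q_{j,k}$ with $\int b_{j,k}=0$ and $\|b_{j,k}\|_{L^1}\lesssim |Q_{j,k}|$, while $|g_j|\lesssim 1$ outside $\bigcup_k Q_{j,k}$ and $|g_j|\lesssim\langle|a_j|\rangle_{Q_{j,k}}$ on each $Q_{j,k}$. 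Because $p_j\le q_j$, one verifies $\|g_j\|_{L^{q_j}}^{q_j}\lesssim \|a_j\|_{L^{p_j}}^{p_j}\le 1$.

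\emph{Expansion; the all-good term.} Multilinearity gives
\[
\mathcal{C}_{m+1,A}(a_1,\dots,a_m;f) = \sum_{\sigma\in\{g,b\}^m}\mathcal{C}_{m+1,A}\bigl(h_1^{\sigma_1},\dots,h_m^{\sigma_m};f\bigr),
\]
with $h_j^g = g_j$ and $h_j^b = b_j$. The term $\sigma\equiv g$ is bounded directly via the hypothesis, since $\|g_j\|_{L^{q_j}}^{q_j}\lesssim 1$ and the Orlicz norm on $f$ induced by $\beta_{q_{m+1}}$ dominates the one with $\beta$. For each of the $2^m-1$ mixed terms it suffices to estimate the contribution outside the exceptional set $\Omega^* = \bigcup_{j,k}3Q_{j,k}$, whose measure $|\Omega^*|\lesssim\sum_j\|a_j\|_{L^{p_j}}^{p_j}\le 1$ may be absorbed into the right-hand side.

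\emph{Mixed terms and main obstacle.} Fix a mixed term whose first slot, say, equals $b_1 = \sum_k b_{1,k}$. For $x\notin 3Q_{1,k}$ I exploit the zero mean of $b_{1,k}$ via the smoothed kernel $K^1_{A,t_k}$ of Lemma \ref{lem2.5} with $t_k\sim\ell(Q_{1,k})$: writing
\[
\int K_A(x;y_1,\dots,y_{m+1})\,b_{1,k}(y_1)\,{\rm d}y_1 = \int \bigl(K_A - K^1_{A,t_k}\bigr)\,b_{1,k}(y_1)\,{\rm d}y_1,
\]
Lemma \ref{lem2.5} bounds the integrand by $\phi(|y_{m+1}-y_1|/t_k)\,|P_2(A;x,y_{m+1})|\bigl(\sum_\ell|x-y_\ell|\bigr)^{-m-2}$, with $|P_2(A;x,y_{m+1})|$ controlled via Lemma \ref{lem2.1} and $\|A'\|_{{\rm BMO}}=1$. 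Integrating this pointwise estimate against $h_2^{\sigma_2},\dots,h_m^{\sigma_m}$ and $f$ and summing in $k$ reduces the mixed term, on $(\Omega^*)^c$, to a product of Hardy--Littlewood maximal functions of the $|a_j|$ and an $L\log L$-type maximal function of $f$; the required measure bound then follows from (\ref{eq2.01}) and the weak-$(1,1)$ estimate for $M$. The principal difficulty lies precisely in this last step: because $K_A$ has no regularity in $y_{m+1}$, the factor $P_2(A;x,y_{m+1})$ does not see the CZ cubes $\{Q_{j,k}\}$, so an $M_{L\log L}$ landing on $f$ is unavoidable. This is exactly what forces $\beta_{q_{m+1}}=\max\{1,\beta\}$ when $q_{m+1}=1$, while for $q_{m+1}>1$ the extra logarithm can be absorbed into the power $q_{m+1}$, leaving $\beta_{q_{m+1}}=\beta$ unchanged.
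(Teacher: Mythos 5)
Your overall strategy (decompose the $a_j$ but leave $f$ untouched, then invoke the nonsmooth kernel condition of Lemma~\ref{lem2.5}) is the right one and matches the paper's. But the pivot of your argument contains a real error. You decompose each $a_j$ via a mean-zero Calder\'on--Zygmund splitting $a_j=g_j+b_j$ and then claim that, because $\int b_{1,k}=0$,
\[
\int K_A(x;y_1,\dots,y_{m+1})\,b_{1,k}(y_1)\,{\rm d}y_1
=\int \bigl(K_A-K^1_{A,t_k}\bigr)(x;y_1,\dots,y_{m+1})\,b_{1,k}(y_1)\,{\rm d}y_1 .
\]
This identity requires $\int K^1_{A,t_k}(x;y_1,\dots,y_{m+1})\,b_{1,k}(y_1)\,{\rm d}y_1=0$, which does not follow from $\int b_{1,k}=0$: since $K^1_{A,t}(x;y_1,\dots)=\int K_A(x;z,y_2,\dots)\,k_t(z,y_1)\,{\rm d}z$, the function $y_1\mapsto K^1_{A,t_k}(x;y_1,\dots)$ is not constant. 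More fundamentally, the mean-zero trick is exactly what the nonsmooth-kernel framework is designed to avoid: $K_A$ has no pointwise regularity in $y_1$ that could convert cancellation of $b_{1,k}$ into decay, so zero mean buys you nothing here.

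The paper bypasses mean-zero entirely. It Whitney-decomposes the level set $\Omega=\{M(|a_1|^{p_1})>1\}$ into intervals $\{I_l\}$ (Lemma~\ref{lem2.extra}) and splits $a_1$ into \emph{three} pieces: $a_1^1=a_1\chi_{\mathbb{R}\setminus\Omega}$ (bounded), $a_1^2=\sum_l D_{|I_l|}b_1^l$ (in $L^{q_1}$ with bounded-overlap control), and $a_1^3=\sum_l(b_1^l-D_{|I_l|}b_1^l)$, with $b_1^l=a_1\chi_{I_l}$ not mean-zero. The difference-of-kernels $K_A-K^1_{A,|I_l|}$ then appears for purely algebraic reasons, by Fubini applied to $b_1^l-D_{|I_l|}b_1^l$, after which Lemma~\ref{lem2.5} controls the integrand. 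The paper also handles one slot at a time, proving (\ref{eq2.07}) for $a_1$ and iterating $m$ times, whereas you expand all $m$ slots simultaneously; that is a harmless stylistic choice, but it does multiply the bookkeeping for the mixed terms. If you replace your mean-zero splitting by the three-way decomposition (writing $b_{j,k}=(b_{j,k}-D_{t_k}b_{j,k})+D_{t_k}b_{j,k}$, estimating the first piece by the nonsmooth kernel condition and absorbing the second into the good part), your argument becomes correct and essentially coincides with the paper's. The observations in your last paragraph about why $M_{L\log L}$ must land on $f$, and how that forces $\beta_{q_{m+1}}=\max\{1,\beta\}$ when $q_{m+1}=1$ while being absorbable when $q_{m+1}>1$, are accurate and match the paper.
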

\begin{proof} We employ the ideas in \cite{dgy}, together with some modifications. At first, we prove that
\begin{eqnarray}\label{eq2.07}&&|\{x\in\mathbb{R}:\,|\mathcal{C}_{m+1,A}(a_1,\dots,a_m;\,f)|>1\}|\\
&&\quad \lesssim\|a_1\|_{L^{p_1}(\mathbb{R})}^{p_1}+\sum_{j=2}^{m}\|a_j\|_{L^{q_j}(\mathbb{R}^n)}^{q_j}+
\int_{\mathbb{R}}|f(x)|^{q_{m+1}}
\log^{\beta}\big({\rm e}+|f(x)|\big){\rm d}x.\nonumber
\end{eqnarray}
To do this, we apply Lemma \ref{lem2.extra} to  the set $$\Omega=\{x\in\mathbb{R}:\, M(|a_1|^{p_1})(x)>1\},$$ and obtain a sequence of intervals $\{I_l\}$ with disjoint interiors, such that $$\frac{1}{|I_l|}\int_{I_l}|a_1(x)|^{p_1}{\rm d}x\lesssim 1,$$
and $\sum_{l}\chi_{4I_l}(x)\lesssim \chi_{\Omega}(x)$.
Let $D_t$ be the integral operator defined by
$$
D_th(x)=\int_{\mathbb{R}}k_t(x,\,y)h(y){\rm d}y,
$$
with $k_t$ the same as in Lemma \ref{lem2.5}. Then $\{D_t\}_{t>0}$ is an approximation to the identity in the sense of Definition \ref{defn2.1}.
Set
$$a_1^1(x)=a_1(x)\chi_{\mathbb{R}^n\backslash\Omega}(x),\,\,\,a_1^2(x)=\sum_{l}D_{|I_l|}b_{1}^{l}(x),$$
and$$a_1^3(x)=\sum_{l}\big(b_{1}^{l}(x)-D_{|I_l|}b_{1}^{l}(x)\big),$$
with $b_1^{l}(y)=a_1(y)\chi_{I_l}(y)$.
Obviously, $\|b_l^l\|_{L^{p_1}(\mathbb{R}^n)}\lesssim 1$ and $\|a_1^1\|_{L^{\infty}(\mathbb{R})}\lesssim 1$.  Our hypothesis states that
\begin{eqnarray*}
&&\big|\big\{x\in \mathbb{R}:\,\big|\mathcal{C}_{m+1,A}(a_1^1,\dots,a_m;\,f)(x)|>1\big\}\big|\\
&&\quad\lesssim
\|a_1\|_{L^{p_1}(\mathbb{R})}^{p_1}+\sum_{j=2}^{m}\|a_j\|_{L^{q_j}(\mathbb{R})}^{q_j}+
\int_{\mathbb{R}^n}|f(x)|^{q_{m+1}}
\log^{\beta}\big({\rm e}+|f(x)|\big){\rm d}x
\end{eqnarray*}
On the other hand, as it was pointed out in \cite[p. 241]{duongmc}, we know that
$$\|a_1^2\|_{L^{q_1}(\mathbb{R})}\lesssim\Big\|\sum_{l}\chi_{I_l}\Big\|_{L^{q_1}(\mathbb{R})}\lesssim \big(\sum_{l}|Q_l|\big)^{1/q_1} \lesssim \|a_1\|_{L^{p_1}(\mathbb{R})}^{p_1/q_1}.$$
Thus,
\begin{eqnarray*}
&&\big|\big\{x\in \mathbb{R}:\,\big|\mathcal{C}_{m+1,A}(a_1^2,\dots,a_m;\,f)(x)|>1\big\}\big|\\
&&\quad\lesssim
\|a_1^2\|_{L^{q_1}(\mathbb{R})}^{q_1}+\sum_{j=2}^{m}\|a_j\|_{L^{q_j}(\mathbb{R})}^{q_j}+
\int_{\mathbb{R}}|f_{m+1}(x)|^{q_{m+1}}
\log^{\beta}\big({\rm e}+|f_m(x)|\big){\rm d}x\\
&&\quad\lesssim
\|a_1\|_{L^{p_1}(\mathbb{R})}^{p_1}+\sum_{j=2}^{m}\|a_j\|_{L^{q_j}(\mathbb{R})}^{q_j}+
\int_{\mathbb{R}}|f_{m+1}(x)|^{q_{m+1}}
\log^{\beta}\big({\rm e}+|f_m(x)|\big){\rm d}x.
\end{eqnarray*}
Our proof for (\ref{eq2.07}) is now reduced to proving
\begin{eqnarray}\label{eq2.08}
&&\big|\big\{x\in \mathbb{R}:\,\big|\mathcal{C}_{m+1,A}\big(a_1^{3},\,\dots,\,a_{m},f\big)(x)|>1\big\}\big|\lesssim
\|a_1\|_{L^{p_1}(\mathbb{R})}^{p_1}\\
&&\quad+\sum_{j=2}^{m}\|a_j\|_{L^{q_j}(\mathbb{R})}^{q_j}+
\int_{\mathbb{R}^n}|f_{m+1}(x)|^{q_{m+1}}
\log^{\tilde{\beta}_{q_{m+1}}}\big({\rm e}+|f_{m+1}(x)|\big){\rm d}x,\nonumber
\end{eqnarray}
here, $\tilde{\beta}_{q_{m+1}}=0$ if $q_{m+1}\in (1,\,\infty)$ and $\tilde{\beta}_{q_{m+1}}=1$ if $q_{m+1}=1$.

We now prove (\ref{eq2.08}). Let $\widetilde{\Omega}=\cup_{l}16I_l$. It is obvious that $$|\widetilde{\Omega}|\lesssim \|a_1\|_{L^{p_1}(\mathbb{R})}^{p_1}.$$
For each $x\in\mathbb{R}\backslash\widetilde{\Omega}$, by Lemma \ref{lem2.5}, we can write
\begin{eqnarray*}
&&\big|\mathcal{C}_{m+1,A}(a_1^3,\,a_2,\dots,\,a_m,\,f)(x)\big|\\
&&\quad\lesssim\sum_{l}\int_{\mathbb{R}^{m+1}}\frac{|P_2(A,\,x,\,y_{m+1})|}{(\sum_{k=1}^{m+1}|x-y_k|)^{m+2}}
\phi\Big(\frac{|y_{m+1}-y_1|}{|I_l|}\Big)|b_1^l(y_1)|\\
&&\qquad\qquad\qquad\times\prod_{j=2}^m|a_j(y_j)||f(y_{m+1})|{\rm d}\vec{y}.
\end{eqnarray*}
Observe that
$$\int_{I_l}|b_1^l(y_1)|{\rm d}y_1\lesssim |I_l|,$$
and for $x\in\mathbb{R}\backslash \widetilde{\Omega}$,
\begin{eqnarray*}
&&\int_{\mathbb{R}^{m-1}}\frac{1}{(\sum_{k=1}^{m+1}|x-y_k|)^{m+2}}\prod_{j=2}^m|a_j(y_j)|{\rm d}y_2\dots{\rm d}y_m\\
&&\quad\lesssim\frac{1}{|x-y_{m+1}|^3}\prod_{j=2}^mMa_j(x).
\end{eqnarray*}
Let $${\rm E}(x)=\sum_{l}|I_l|\Big(\int_{4I_l}\frac{|P_2(A;\,x,\,y_{m+1})|}{|x-y_{m+1}|^{3}}|f(y_{m+1})|{\rm d}y_{m+1}\Big).$$
Thus,
$$
\big|\mathcal{C}_{m+1,A}(a_1^3,\,a_2,\dots,\,a_m,\,f)(x)\big|\lesssim\prod_{j=2}^mMa_j(x){\rm E}(x).
$$Set
\begin{eqnarray}\label{eq2.09}A_{I_l}(y)=A(y)-\langle A'\rangle_{I_l}y.\end{eqnarray}
It is easy to verify that for  all $y,\,z\in \mathbb{R}$,
$$P_2(A;\,y,\,z)=P_2(A_{I_l};\,y,\,z).$$
A straightforward computation involving Lemma \ref{lem2.1} shows that for  $y_{m+1}\in 4I_l$,
$$|A_{I_l}(x)-A_{I_l}(y_{m+1})|\lesssim |x-y_{m+1}|\big(1+\big|\langle A'\rangle_{I_l}-\langle A'\rangle_{I_x^{y_{m+1}}}\big|\big).$$
Thus,
\begin{eqnarray*}
\int_{\mathbb{R}\backslash \widetilde{\Omega}}\frac{|P_2(A;\,x,\,y_{m+1})|}{|x-y_{m+1}|^{3}}{\rm d}x&\lesssim&\sum_{k=2}^{\infty}\int_{2^kI_l}(k+|A'(y_{m+1})-\langle A'\rangle_{I_l}|\big)\frac{{\rm d}x}{|x-y_{m+1}|^{2}}\\
&\lesssim&|I_l|^{-1}(1+|A'(y_{m+1})-\langle A'\rangle_{I_l}|).
\end{eqnarray*}
This, via the generalization of H\"older's inequality, yields
\begin{eqnarray*}
\int_{\mathbb{R}\backslash \widetilde{\Omega}}\int_{4I_l}\frac{|P_2(A;\,x,\,y)|}{|x-y|^3}|f(y)|{\rm d}y{\rm d}x&\lesssim&|I_l|^{-1}\int_{4I_l}|f(y)|| A'(y)-\langle A'\rangle_{I_l}|{\rm d}y\\
&\lesssim &\|f\|_{L\log L,\,4I_l}.
\end{eqnarray*}
Combining the estimates above then yields
$$\int_{\mathbb{R}\backslash \widetilde{\Omega}}{\rm E}(x){\rm d}x\lesssim \sum_{l}|I_l|\|f\|_{L\log L,\,4I_l}\lesssim\sum_{l}|I_l|
+\int_{\mathbb{R}}|f(y)|\log ({\rm e}+|f(y)|){\rm d}y,$$
since $$\|f\|_{L\log L,\,4I_l}\lesssim 1+\frac{1}{|4I_l|}\int_{4I_l}|f(y)|\log ({\rm e}+|f(y)|){\rm d}y,$$
see \cite[p. 69]{rr}. Thus,
\begin{eqnarray*}
&&\big|\big\{x\in \mathbb{R}:\,\big|\mathcal{C}_{m+1,A}\big(a_1^{3},\,a_2\dots,\,a_{m},\,f\big)(x)|>1\big\}\big|\\
&&\quad\lesssim |\widetilde{\Omega}|+\sum_{j=2}^{m}|\{x\in\mathbb{R}:\,Ma_j(x)>1\}|
+\big|\big\{x\in\mathbb{R}\backslash \widetilde{\Omega}:\,{\rm E}(x)>1\big\}\big|\\
&&\quad\lesssim\sum_{j=2}^{m}\|a_j\|_{L^{q_j}(\mathbb{R})}^{q_j}+\int_{\mathbb{R}^n\backslash \widetilde{\Omega}}{\rm E}(x){\rm d}x\\
&&\quad\lesssim\|a_1\|_{L^{p_1}(\mathbb{R})}^{p_1}+\sum_{j=2}^{m}\|a_j\|_{L^{q_j}(\mathbb{R})}^{q_j}
+\int_{\mathbb{R}}|f(x)|
\log\big({\rm e}+|f(x)|\big){\rm d}x.
\end{eqnarray*}
This establishes (\ref{eq2.08}) for the case of $q_{m+1}=1$. For the case of $q_{m+1}\in (1,\,\infty)$, it follows from H\"older's inequality that
\begin{eqnarray*}\sum_{l}|I_l|\|f\|_{L\log L,\,4I_l}&\lesssim&\sum_{l}|I_l|^{1-1/q_{m+1}}\Big(\int_{4I_l}|f(y)|^{q_{m+1}}{\rm d}y\Big)^{1/q_{m+1}}\\
&\lesssim&\sum_{l}|I_l|+\sum_{l}\int_{4I_l}|f(y)|^{q_{m+1}}{\rm d}y.
\end{eqnarray*}
Thus, the inequality (\ref{eq2.08}) still holds for $q_{m+1}\in (1,\,\infty)$.

With the estimate (\ref{eq2.07}) in hand, applying the argument above to $a_2$ (fix the exponents $p_1,\,q_3,\,\dots,q_m,\,q_{m+1}$),  we can prove that\begin{eqnarray*}&&|\{x\in\mathbb{R}:\,|\mathcal{C}_{m+1,A}(a_1,\dots,a_m;\,f)|>1\}|\\
&&\quad \lesssim\sum_{j=1}^2\|a_j\|_{L^{p_j}(\mathbb{R})}^{p_j}+\sum_{j=3}^{m}\|a_j\|_{L^{q_j}(\mathbb{R})}^{q_j}+
\int_{\mathbb{R}}|f(x)|^{q_{m+1}}
\log^{\beta_{q_{m+1}}}\big({\rm e}+|f(x)|\big){\rm d}x.\nonumber
\end{eqnarray*}
Repeating this procedure $m$ times then leads to our desired conclusion.
\end{proof}
\begin{lemma}\label{lem2.8}Let $A$ be a function on $\mathbb{R}$ such that $A'\in {\rm BMO}(\mathbb{R})$. Then for $s\in (0,\,1/2)$, \begin{eqnarray}\label{eq2.10}M^{\sharp}_{0,\,s}\big(\mathcal{C}_{m+1,A}(a_1,\dots,a_m;\,f)\big)(x)\lesssim M_{L\log L}f(x)\prod_{j=1}^mMa_j(x),\end{eqnarray}
provided that $a_1,\,\dots,\,a_j\in C^{\infty}_0(\mathbb{R})$.
\end{lemma}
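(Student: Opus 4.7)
The plan is to control, for every interval $I\ni x$ and a suitable small exponent $\tau\in(0,1/(m+2))$, the quantity $\inf_c\bigl(\frac{1}{|I|}\int_I|\mathcal{C}_{m+1,A}(\vec a;f)(y)-c|^{\tau}{\rm d}y\bigr)^{1/\tau}$ by $M_{L\log L}f(x)\prod_{j=1}^m Ma_j(x)$; Chebyshev's inequality then dominates $M^{\sharp}_{0,s}$ for all sufficiently small $s$. Since $P_2(A_I;\cdot,\cdot)=P_2(A;\cdot,\cdot)$ for $A_I(y):=A(y)-\langle A'\rangle_I y$, we may replace $A$ by $A_I$ and assume $\langle A'\rangle_I=0$ with $\|A'\|_{{\rm BMO}}=1$. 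Split $f=f^{0}+f^{\infty}$ with $f^{0}=f\chi_{4I}$, let $x_I$ denote the center of $I$, and choose $c=c_I:=\mathcal{C}_{m+1,A}(\vec a;f^{\infty})(x_I)$; then the above quantity is at most the sum of the $L^{\tau}(I,{\rm d}y/|I|)$ norms of $\mathcal{C}_{m+1,A}(\vec a;f^{0})(y)$ and $\mathcal{C}_{m+1,A}(\vec a;f^{\infty})(y)-c_I$.

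The local part is dispatched immediately by Lemma \ref{lem2.4}: $\bigl(\frac{1}{|I|}\int_I|\mathcal{C}_{m+1,A}(\vec a;f^{0})|^{\tau}{\rm d}y\bigr)^{1/\tau}\lesssim\|f\|_{L\log L,4I}\prod_j\langle|a_j|\rangle_{4I}\lesssim M_{L\log L}f(x)\prod_j Ma_j(x)$. For the far part, the plan is to prove the stronger $L^1$ bound $\frac{1}{|I|}\int_I|\mathcal{C}_{m+1,A}(\vec a;f^{\infty})(y)-c_I|{\rm d}y\lesssim M_{L\log L}f(x)\prod_j Ma_j(x)$. Writing $K_A(y;\vec z)=K(y;\vec z)R_A(y,z_{m+1})$ with $R_A(y,z):=P_2(A;y,z)/(y-z)$, I decompose
\[
K_A(y;\vec z)-K_A(x_I;\vec z)=[K(y;\vec z)-K(x_I;\vec z)]R_A(y,z_{m+1})+K(x_I;\vec z)[R_A(y,z_{m+1})-R_A(x_I,z_{m+1})].
\]
For the first bracket, I differentiate the rational factor $(\cdot-z_{m+1})^{-(m+1)}$ of $K$ via the mean value theorem, producing the gain $|y-x_I|/|y-z_{m+1}|^{m+2}$ (Lemma \ref{lem2.3} is not directly applicable because its hypothesis demands all $|y-z_j|$ be large, whereas here only $|y-z_{m+1}|$ is); the symmetric difference of the truncating indicator functions of $K$ forces some $z_k$ to lie in the small interval between $y$ and $x_I$, contained in $I$, and is absorbed by $|I|\cdot Ma_k(x)$. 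The factor $|R_A(y,z_{m+1})|$ is bounded pointwise by $1+\log(|y-z_{m+1}|/|I|)+|A'(z_{m+1})-\langle A'\rangle_I|$, using the classical logarithmic comparison of $A'$-averages across nested intervals. For the second bracket, the identity $\partial_sR_A(s,z)=(s-z)^{-2}\int_z^s(A'(s)-A'(t)){\rm d}t$, combined with averaging $s$ over $I$, yields $\frac{1}{|I|}\int_I|R_A(y,z_{m+1})-R_A(x_I,z_{m+1})|{\rm d}y\lesssim\frac{|I|}{|x_I-z_{m+1}|}(1+\log(|x_I-z_{m+1}|/|I|))$, the crucial point being $\frac{1}{|I|}\int_I|A'(s)-\langle A'\rangle_I|{\rm d}s\lesssim 1$ rather than any pointwise bound on $A'$. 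Integrating $\prod_j|a_j(z_j)|$ over the support of the kernel produces the volume factor $|x_I-z_{m+1}|^m\prod_j Ma_j(x)$, a dyadic decomposition of $\mathbb{R}\setminus 4I$ into annuli $2^{k+1}I\setminus 2^kI$ gives a summable series $\sum_k(k+1)^2/2^{k}$, and the generalized H\"older inequality for Orlicz spaces $\frac{1}{|Q|}\int_Q|f(y)||A'(y)-\langle A'\rangle_Q|{\rm d}y\lesssim\|f\|_{L\log L,Q}$ (from John--Nirenberg; see \cite{rr}) converts the BMO-weighted $f$-integrals on each annulus into $M_{L\log L}f(x)$.

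The main obstacle is precisely the second bracket above: because $A'$ is only ${\rm BMO}$ and not pointwise bounded, no uniform-in-$y$ bound on $|R_A(y,z)-R_A(x_I,z)|$ is available, so the sharp-function argument cannot be carried out pointwise. The $\inf_c$ in the definition of $M^{\sharp}_{0,s}$ combined with Chebyshev provides exactly the right flexibility---passing from a pointwise sharp bound to $L^1$ (or $L^{\tau}$) averages in $y\in I$---and once this is granted, the logarithmic growth from the BMO comparisons is absorbed by the dyadic geometric decay, so every contribution collapses to $M_{L\log L}f(x)\prod_j Ma_j(x)$, delivering the claimed estimate (\ref{eq2.10}).
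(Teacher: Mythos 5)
Your proof is correct, but it follows a genuinely different decomposition from the paper's. The paper splits both $f$ and every $a_j$ into near ($\chi_{64I}$) and far parts, producing three groups of terms: a purely local term handled by Lemma~\ref{lem2.4}, mixed terms ${\rm II}$ handled by crude size estimates of the kernel, and a purely far term ${\rm III}$ in which all $a_j^2$ are supported off $64I$, so that the hypothesis $12|y-x_I|<\min_j|y-z_j|$ of Lemma~\ref{lem2.3} is met and that regularity lemma can be invoked directly; the $R_A$-difference is then controlled by the \emph{pointwise} estimate \eqref{eq2.12}, whose BMO growth factor $|A'(z_{m+1})-\langle A'\rangle_I|$ is absorbed by the Orlicz--H\"older inequality when integrating against $|f|$. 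You instead split only $f$, which means Lemma~\ref{lem2.3} is unavailable (a $z_k$ can lie near $I$), and you replace it by a hands-on mean-value-theorem bound on the rational factor together with a telescoping of the indicator functions whose symmetric difference is confined to $I$. For the second bracket you exploit the identity $\partial_s R_A(s,z)=(s-z)^{-2}\int_z^s(A'(s)-A'(t))\,{\rm d}t$ and average over $y\in I$, trading the BMO growth in the dummy variable $z$ for BMO growth in $s\in I$, which is killed by $\frac{1}{|I|}\int_I|A'(s)-\langle A'\rangle_I|\,{\rm d}s\lesssim1$. Your approach is leaner (no $a_j$-decomposition, no case analysis of mixed terms), while the paper's makes the kernel-regularity step a clean citation of Lemma~\ref{lem2.3}. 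One small caveat: your remark that ``no uniform-in-$y$ bound on $|R_A(y,z)-R_A(x_I,z)|$ is available'' is not quite accurate---the paper's \eqref{eq2.12} is exactly such a pointwise bound, at the cost of the factor $|A'(z_{m+1})-\langle A'\rangle_I|$, which the Orlicz--H\"older inequality then handles---so the passage to $L^1$-averages in $y$ is a convenience of your route rather than a logical necessity. Both routes deliver the estimate, and the Chebyshev step from the $L^{\tau}$ (local) plus $L^1$ (far) averages to the John--Str\"omberg sharp function is carried out as you describe.
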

\begin{proof}Without loss of generality, we may assume that $\|A'\|_{{\rm BMO}(\mathbb{R})}=1$. Let $x\in \mathbb{R}$, $I\subset \mathbb{R}$ be an interval  containing $x$. Decompose $f$  as
$$f(y)=f(y)\chi_{64I}(y)+f(y)\chi_{\mathbb{R}\backslash 64I}(y):=f^1(y)+f^2(y),$$
and for $j=1,\,\dots,\,m$, $$a_j(y)=a_j(y)\chi_{64I}(y)+a_j(y)\chi_{\mathbb{R}\backslash 64I}(y):=a_j^1(y)+a_j^2(y).$$
By the estimate (\ref{equa:1.4}), we know $|\mathcal{C}_{m+1,A}(a_1,\dots,a_m,\,f^2)(z)|<\infty$ for a. e. $z\in\mathbb{R}$ and we
can choose some $x_I\in 3I\backslash 2I$ such that $|\mathcal{C}_{m+1,A}(a_1,\dots,a_m,\,f^2)(x_I)|<\infty.$ For $\delta\in (0,\,1)$, write
\begin{eqnarray*}
&&\frac{1}{|I|}\int_{I}\Big|\mathcal{C}_{m+1,A}(a_1,\dots,a_m,\,f)(y)-\mathcal{C}_{m+1,A}(a_1,\dots,a_m,\,f^2)(x_I)\Big|^{\delta}{\rm d}y\\
&&\quad\lesssim\frac{1}{|I|}\int_{I}\big |\mathcal{C}_{m+1,A}(a_1,\dots,a_m;\,f^1)(y)\big|^{\delta}{\rm d}y\\
&&\qquad+\sum_{\Lambda}\frac{1}{|I|}\int_{I}\big |\mathcal{C}_{m+1,A}(a_1^{i_1},\dots,a_m^{i_m};\,f^2)(y)\big|^{\delta}{\rm d}y\\
&&\qquad+\frac{1}{|I|}\int_{I}\Big|\mathcal{C}_{m+1,A}(a_1^2,\dots,a_m^2;f^2)(y)-\mathcal{C}_{m+1,A}(a_1^2,\dots,a_m^2;f^2)(x_I)\Big|^{\delta}{\rm d}y\\
&&\quad:={\rm I}+{\rm II}+{\rm III},
\end{eqnarray*}
where $\Lambda=\{(i_1,\,\dots,\,i_m):\, i_1,\,\dots,\,i_m\in \{1,\,2\},\,\,\min_{j}i_j=1\}$. It follows from Lemma \ref{lem2.4} that
$$
{\rm I}^{1/\delta}\lesssim M_{L\log L}f(x)\prod_{j=1}^{m}Ma_j(x).
$$

We turn our attention to the term ${\rm III}$. Let $A_I$ be defined in (\ref{eq2.09}). Applying Lemma \ref{lem2.1} and the John-Nirenberg inequality, we can verify that if $y\in I$, and $z\in4^{l+1}I\backslash 4^lI$ with $l\in\mathbb{N}$, then
\begin{eqnarray}\label{eq2.11}
|P_2(A_I;y,z)|\lesssim\big(l+|A'(z)-\langle A'\rangle_I|\big)|y-z|.
\end{eqnarray}
This, along with another application of Lemma \ref{lem2.1}, gives us that for $y\in I$ and $z_{m+1}\in4^{l+1}I\backslash 4^lI$,
\begin{eqnarray}\label{eq2.12}
\,\,&&\Big|\frac{P_2(A_I;y,z_{m+1})}{|y-z_{m+1}|}-
\frac{P_2(A_I;x_I,z_{m+1})}{|x_I-z_{m+1}|}\Big|\\
&&\quad\le\frac{|A_I(y)-A_I(x_I)|}{|y-z_{m+1}|}+|P_2(A_I;\,x_I,z_{m+1})|\Big|\frac{1}{|x_I-z_{m+1}|}-\frac{1}{|y-z_{m+1}|}\Big|\nonumber\\
&&\quad\le \big(l+A'(z_{m+1})-\langle  A'\rangle_I|\big)\frac{|y-x_I|}{|x_I-z_{m+1}|}.\nonumber
\end{eqnarray}
We now deduce  from  Lemma \ref{lem2.3} and (\ref{eq2.11})  that
\begin{eqnarray*}
&&\int_{\mathbb{R}^{m+1}}\big|K(y;z_1,\dots,z_{m+1})-K(x_I;z_1,\dots,z_{m+1})\big|\\
&&\qquad\qquad\times\frac{|P_2(A_I;y,z_{m+1})|}{|y-z_{m+1}|}
\prod_{j=1}^m|a^2_j(z_j)||f(z_{m+1})|{\rm d}\vec{z}\\
&&\quad\lesssim\sum_{l=3}^{\infty}l2^{-\gamma l}\prod_{j=1}^{m}\Big(\frac{1}{|4^lI|}\int_{4^l I}|a_j(z_j)|{\rm d}z_j\Big)\\
&&\qquad\times\Big(\frac{1}{|4^lI|}\int_{4^l I}\big| A'(z_{m+1})-\langle A'\rangle_I\big||f(z_{m+1})|{\rm d}z_{m+1}\Big)\\
&&\quad\lesssim  M_{L\log L}f(x)\prod_{j=1}^{m}Ma_j(x).
\end{eqnarray*}

On the other hand, we obtain from (\ref{eq2.12}) and the size condition (\ref{eq2.03}) that
\begin{eqnarray*}
&&\int_{\mathbb{R}^{m+1}}\big|K(x_I;z_1,\dots,z_{m+1})\big|\Big|\frac{P_2(A_I;y,z_{m+1})}{|y-z_{m+1}|}-
\frac{P_2(A_I;x_I,z_{m+1})}{|x_I-z_{m+1}|}\Big|\\
&&\quad\qquad\times\prod_{j=1}^m|a^2_j(z_j)||f^2(z_{m+1})|{\rm d}\vec{z}\lesssim M_{L\log L}f(x)\prod_{j=1}^{m}Ma_j(x).
\end{eqnarray*}
Therefore, for each $y\in I$,
\begin{eqnarray}\label{eq2.13}
&&\big|\mathcal{C}_{m+1,\,A}(a_1,\,\dots,\,a_m;\,f^2)(y)-\mathcal{C}_{m+1,\,A}(a_1,\,\dots,\,a_m;\, f^2)(x_I)\big|\\
&&\quad\lesssim M_{L\log L}f(x)\prod_{j=1}^{m}Ma_j(x),\nonumber
\end{eqnarray}
which shows that
$${\rm III}^{1/\delta}\lesssim M_{L\log L}f(x)\prod_{j=1}^{m}Ma_j(x).$$

It remains to estimate ${\rm II}$. For simplicity, we assume that for some $l_0\in\mathbb{N}$, $i_1=\dots=i_{l_0}=1$ and
$l_{l_0+1}=\dots=i_m=2$. Observe that for $y\in I$,
\begin{eqnarray*}
&&\int_{ \mathbb{R}\backslash 64I}\frac{|P_2(A_I;\,y,z_{m+1})|}{|y-z_{m+1}|^{\frac{m+1}{m+1-l_0}+1}}|f(z_{m+1})|{\rm d}z_{m+1}\\
&&\quad\lesssim\sum_{k=3}^{\infty}\frac{1}{(4^k|I|)^{\frac{l_0+1}{m-l_0+1}}}\int_{4^kI}\big(k+\big|A'(z_{m+1})-\langle A'\rangle_I\big|\big)|f(z_{m+1})|{\rm d}z_{m+1}\\
&&\quad\lesssim |I|^{-\frac{l_0}{m+1-l_0}}M_{L\log L}f(x),
\end{eqnarray*}
and
\begin{eqnarray*}
\int_{ \mathbb{R}\backslash 64I}\frac{1}{|y-z_{j}|^{\frac{m+1}{m+1-l_0}}}|a_j(z_{j})|{\rm d}z_{j}\lesssim|I|^{-\frac{l_0}{m+1-l_0}}Ma_j(x).
\end{eqnarray*}
This, in turn implies that, for each $y\in I$,
\begin{eqnarray}\label{eq2.14}
\,\,\big|\mathcal{C}_{m+1,\,A}(a_1^{i_1},\dots,a_m^{i_m};f^2)(y)\big|&\lesssim&\prod_{j=1}^{l_0}\int_{64I}|a_j^1(z_j)|{\rm d}z_j\\
&&\times\prod_{j=l_0+1}^m\int_{ \mathbb{R}\backslash 64I}\frac{|a_j(z_{j})|}{|y-z_{j}|^{\frac{m+1}{m+1-l_0}}}{\rm d}z_{j}\nonumber\\
&&\times\int_{ \mathbb{R}\backslash 64I}\frac{|P_2(A_I;\,y,z)|}{|y-z|^{\frac{m+1}{m+1-l_0}+1}}|f(z)|{\rm d}z\nonumber\\
&\lesssim&M_{L\log L}f(x)\prod_{j=1}^{m}Ma_j(x).\nonumber
\end{eqnarray}
Therefore,
$${\rm II}^{1/\delta}\lesssim M_{L\log L}f(x)\prod_{j=1}^{m}Ma_j(x).$$
Combining the estimates for I, II and III leads to (\ref{eq2.10}).
\end{proof}

We are now ready to establish the main result in this section.
\begin{theorem}\label{thm2.1}
Let $A$ be a function on $\mathbb{R}$ such that $A'\in {\rm BMO}(\mathbb{R})$. Then \begin{eqnarray}\label{eq2.15}
&&\big|\{x\in\mathbb{R}:\,|\mathcal{C}_{m+1,\,A}(a_1,\,\dots,\,a_m;\, f)(x)|>1\}\big|\\
&&\quad\lesssim \sum_{j=1}^{m}\|a_j\|_{L^1(\mathbb{R})}+\int_{\mathbb{R}}|f(y)|
\log \big({\rm e}+|f(y)|\big){\rm d}y.\nonumber
\end{eqnarray}
\end{theorem}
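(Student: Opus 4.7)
The plan is to produce an initial distributional estimate in which the $f$-side already carries the $L\log L$-weight, and then to feed it into the bootstrap Lemma \ref{lem2.7} to reduce the $a_j$-exponents down to $1$. Concretely, I aim to show
\[
|\{|\mathcal{C}_{m+1,A}(a_1,\ldots,a_m;f)|>1\}|\lesssim \sum_{j=1}^m\|a_j\|_{L^{q_j}(\mathbb{R})}^{q_j}+\int_{\mathbb{R}}|f(y)|\log\bigl(e+|f(y)|\bigr)\,dy
\]
for any fixed $q_1,\ldots,q_m\in(1,\infty)$. This is precisely the hypothesis of Lemma \ref{lem2.7} with $q_{m+1}=1$ and $\beta=1$; since $\beta_{q_{m+1}}=\max\{1,1\}=1$ in that case, a single application of Lemma \ref{lem2.7} with $p_j=1$ for every $j=1,\ldots,m$ produces exactly (\ref{eq2.15}).

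For the initial estimate, using (\ref{equa:1.4}) and a standard density argument I may assume $a_j\in C^{\infty}_0(\mathbb{R})$ and $f$ is bounded with compact support, so that $\mathcal{C}_{m+1,A}(a_1,\ldots,a_m;f)\in L^p(\mathbb{R})$ for every $p\in(1,\infty)$ and both Lemmas \ref{lem2.2} and \ref{lem2.8} apply. Fix $q_1,\ldots,q_m\in(1,\infty)$, set $1/r=1+\sum_{j=1}^m 1/q_j\in(0,1)$, and take
\[
\Phi(\lambda)=\frac{\lambda^r}{\log(e+1/\lambda)},
\]
which one checks is increasing and doubling with $\Phi(1)\asymp 1$. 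Combining Lemma \ref{lem2.2} with Lemma \ref{lem2.8} dominates $\sup_\lambda \Phi(\lambda)|\{|\mathcal{C}_{m+1,A}|>\lambda\}|$ by a constant multiple of $\sup_\lambda \Phi(\lambda)|\{M_{L\log L}f\cdot\prod_{j=1}^m Ma_j>\lambda\}|$. Splitting via the inclusion
\[
\Bigl\{M_{L\log L}f\prod_{j=1}^m Ma_j>\lambda\Bigr\}\subset\{M_{L\log L}f>\lambda^r\}\cup\bigcup_{j=1}^m\{Ma_j>\lambda^{r/q_j}\}
\]
(valid because $r+\sum_{j=1}^m r/q_j=1$), the $L^{q_j}$-boundedness of the Hardy--Littlewood maximal operator gives $\Phi(\lambda)|\{Ma_j>\lambda^{r/q_j}\}|\lesssim\|a_j\|_{L^{q_j}}^{q_j}$, while (\ref{eq2.01}) combined with the $\log(e+1/\lambda)$ in the denominator of $\Phi$ gives $\Phi(\lambda)|\{M_{L\log L}f>\lambda^r\}|\lesssim\int_{\mathbb{R}}|f|\log(e+|f|)\,dy$, both uniformly in $\lambda>0$. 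Specializing to $\lambda=1$, where $\Phi(1)\asymp 1$, produces the desired initial bound.

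The main technical obstacle is the choice of $\Phi$. With the naive guess $\Phi(\lambda)=\lambda^r$, the factor $\log(e+|f|/\lambda^r)$ produced by (\ref{eq2.01}) blows up like $\log(1/\lambda)$ as $\lambda\to 0^+$, forcing the supremum on the right to be infinite. The $1/\log(e+1/\lambda)$ factor in $\Phi$ absorbs precisely this divergence while keeping $\Phi$ doubling and $\Phi(1)\asymp 1$, which is what makes the distributional inequality at the critical threshold $\lambda=1$ readable. Once this initial estimate is in hand, Lemma \ref{lem2.7} closes the argument in a single step.
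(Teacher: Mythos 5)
Your overall strategy mirrors the paper's: combine Lemma~\ref{lem2.2} with the sharp function bound of Lemma~\ref{lem2.8} under a log-corrected weight $\Phi$ to get a distributional estimate carrying the $L\log L$ term on the $f$-side, then feed it into the bootstrap Lemma~\ref{lem2.7}. Your choice of $\Phi$, the level-set splitting, the verification that $\Phi(\lambda)|\{M_{L\log L}f>\lambda^r\}|$ is uniformly controlled, and the final single application of Lemma~\ref{lem2.7} with $q_{m+1}=1$, $\beta=1$, $p_j=1$ are all correct. (Minor typo: since $1/r=1+\sum_j 1/q_j>1$, it is $r\in(0,1)$, not $1/r\in(0,1)$.)

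However, there is a genuine gap: you never verify the finiteness hypothesis $\sup_{\lambda>0}\Phi(\lambda)\,|\{|\mathcal{C}_{m+1,A}(\vec a;f)|>\lambda\}|<\infty$ required by Lemma~\ref{lem2.2}, and the justification you offer is incorrect. Knowing $\mathcal{C}_{m+1,A}(\vec a;f)\in L^p(\mathbb{R})$ for all $p\in(1,\infty)$ only gives $|\{|\mathcal{C}_{m+1,A}(\vec a;f)|>\lambda\}|\lesssim\lambda^{-p}$ with $p>1$, while your $\Phi(\lambda)\asymp\lambda^r/\log(1/\lambda)$ as $\lambda\to0^+$ with $r<1$; the product $\Phi(\lambda)\lambda^{-p}\asymp\lambda^{r-p}/\log(1/\lambda)$ diverges as $\lambda\to0^+$ because $r<1<p$. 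The known weak $L\log L$ endpoint from \cite{hy} fails the same way. This is precisely why the paper proceeds in two stages: it first applies Lemma~\ref{lem2.2} with $\Phi(t)=t^q$, $q>1$, where finiteness is trivial from $\mathcal{C}_{m+1,A}(\vec a;f)\in L^q$, to obtain the weak-type bound (\ref{eq2.16}); it then pushes (\ref{eq2.16}) through Lemma~\ref{lem2.7} to obtain boundedness from $L^{p_1^*}\times\dots\times L^{p_m^*}\times L^{q_{m+1}}$ into $L^{p^*,\infty}$ with $p^*<1$, which yields the decay $|\{|\mathcal{C}_{m+1,A}(\vec a;f)|>\lambda\}|\lesssim\lambda^{-p^*}$ with $p^*<p$ that actually makes $\sup_\lambda\psi(\lambda)|\{\cdot>\lambda\}|$ finite for the log-corrected $\psi$. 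Your argument needs this intermediate bootstrap inserted before the second application of Lemma~\ref{lem2.2}; as written it does not close.
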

\begin{proof} Without loss of generality, we may assume that $a_1,\,\dots,\,a_m\in C^{\infty}_0(\mathbb{R})$. At first, let $q_1,\,\dots,q_{m+1},\,q\in (1,\,\infty)$ with $1/q=1/q_1+\dots+1/q_{m+1}$. Recalling that $\mathcal{C}_{m+1,A}$ is bounded from $L^{\infty}(\mathbb{R})\times \dots\times L^{\infty}(\mathbb{R})\times L^q(\mathbb{R})$ to $L^q(\mathbb{R})$ (see \cite{hy}),  we then know that for bounded functions $a_1\,\dots,\,a_m,\,f$ with compact supports, $$\sup_{\lambda>0}\lambda^{q}|\{x\in\mathbb{R}:\,\mathcal{C}_{m+1,A}(a_1,\dots,a_m;\,f)>\lambda\}|\lesssim \|f\|_{L^q(\mathbb{R})}^q \prod_{j=1}^m\|a_j\|_{L^{\infty}(\mathbb{R})}^q<\infty.$$ This, along with  Lemma \ref{lem2.2} and Lemma \ref{lem2.8}, leads to
\begin{eqnarray}\label{eq2.16}\|\mathcal{C}_{m+1,A}(a_1,\dots,a_m;\,f)\|_{L^{q,\,\infty}(\mathbb{R})}\lesssim
\|f\|_{L^{q_{m+1}}(\mathbb{R})}\prod_{j=1}^m\|a_j\|_{L^{q_j}(\mathbb{R})}.\end{eqnarray}

Now let $r_1\in [1,\,q_1),\,\,\dots,\,r_m\in [1,\,q_m)$ and $1/r=1/r_1+\dots+1/r_m+1/q_{m+1}$. Invoking Lemma \ref{lem2.7}, we deduce from (\ref{eq2.16}) that
$$\big|\big\{x\in\mathbb{R}:|\mathcal{C}_{m+1,A}(a_1,\dots,a_m; f)(x)|>1\big\}\big|\lesssim \sum_{j=1}^{m}\|a_j\|_{L^{r_j}(\mathbb{R}^n)}^{r_j}+\|f\|_{L^{q_{m+1}}(\mathbb{R})}^{q_{m+1}}.
$$
This, via homogeneity, shows that $\mathcal{C}_{m+1,\,A}$ is bounded from $L^{r_1}(\mathbb{R})\times\dots \times L^{r_m}(\mathbb{R})\times L^{q_{m+1}}(\mathbb{R})$ to $L^{r,\,\infty}(\mathbb{R})$.

We now prove that for  $p_1,\,\dots,\,p_m\in (1,\,\infty)$, and $p\in (1/(m+1),\,1)$ such that $1/p=1/p_1+\dots+1/p_m+1$,
\begin{eqnarray}\label{eq2.17}&&|\{x\in\mathbb{R}:\,|\mathcal{C}_{m+1,\,A}(a_1,\dots,a_m;\,f)(x)|>1\}|\\
&&\quad\lesssim\sum_{j=1}^m\|a_j\|_{L^{p_j}(\mathbb{R})}^{p_j}
+\int_{\mathbb{R}}|f(x)|\log ({\rm e}+|f(x)|){\rm d}x.\nonumber
\end{eqnarray}To this aim, we choose $q_1,\,\dots q_{m+1}\in (1,\,\infty)$ such that $1/q=1/q_1+\dots+1/q_{m+1}<1$, and
$p_1*\in [1,\,q_1),\,\dots,\,p_m^*\in [1,\,q_m)$, $p^*\in (0,\,1)$ such that $1/p^*=1/p_1^*+\dots+1/p_m^*+1/q_{m+1}$ and $p^*<p$. Recall that $\mathcal{C}_{m+1,\,A}$ is bounded from $L^{p_1^*}(\mathbb{R})\times\dots \times L^{p_m^*}(\mathbb{R})\times L^{q_{m+1}}(\mathbb{R})$ to $L^{p^*,\,\infty}(\mathbb{R})$. Thus, for bounded functions $a_1,\,\dots,\,a_m,\,f$ with compact support,
$$\lambda^{p^*}|\{x\in\mathbb{R}:\,\mathcal{C}_{m+1,A}(a_1,\dots,a_m;\,f)>\lambda\}|\lesssim \prod_{j=1}^m\|a_j\|_{L^{p_j^*}(\mathbb{R})}^p\|f\|_{L^{p_{m+1}^*}(\mathbb{R})}^p.$$
Let $\psi(t)=t^{p}\log^{-1}({\rm e}+t^{-p})$. A trivial computation gives us that
\begin{eqnarray*}&&\sup_{\lambda>0}\psi(\lambda)|\{x\in\mathbb{R}:\,\mathcal{C}_{m+1,A}(a_1,\dots,a_m;\,f)>\lambda\}|\\
&&\quad\lesssim\sup_{0<\lambda<1}\lambda^{p^*}|\{x\in\mathbb{R}:\,\mathcal{C}_{m+1,A}(a_1,\dots,a_m;\,f)>\lambda\}|\\
&&\qquad+\sup_{\lambda\geq 1}\lambda^{2}|\{x\in\mathbb{R}:\,\mathcal{C}_{m+1,A}(a_1,\dots,a_m;\,f)>\lambda\}|\\
&&\quad\lesssim \|f\|_{L^{q_{m+1}}(\mathbb{R})}^p\prod_{j=1}^m\|a_j\|_{L^{p_j^*}(\mathbb{R})}^p+\|f\|_{L^2(\mathbb{R})}^2\prod_{j=1}^m\|a_j\|_{L^{\infty}(\mathbb{R})}^2
<\infty.\end{eqnarray*}
This, via Lemma \ref{lem2.2}, Lemma \ref{lem2.8} and the estimate (\ref{eq2.01}), tells us that
\begin{eqnarray*}&&|\{x\in\mathbb{R}:\,|\mathcal{C}_{m+1,\,A}(a_1,\dots,a_m;\,f)(x)|>1\}|\\
&&\quad\lesssim\sup_{\lambda>0}\psi(\lambda)\sum_{j=1}^m\big|\{x\in\mathbb{R}:Ma_j(x)>\lambda^{\frac{p}{p_j}}\}\big|\\
&&\qquad+\sup_{\lambda>0}\psi(\lambda)|\{x\in\mathbb{R}: M_{L\log L}f(x)>\lambda^{p}\}\big|\\
&&\quad\lesssim\sup_{\lambda>0}\psi(\lambda)\Big(\lambda^{-p}\sum_{j=1}^m\|a_j\|_{L^{p_j}(\mathbb{R})}^{p_j}+\int_{\mathbb{R}}\frac{|f(x)|}{\lambda^p}
\log \Big({\rm e}+\frac{|f(x)|}{\lambda^p}\Big){\rm d}x\Big)\\
&&\quad\lesssim\sum_{j=1}^m\|a_j\|_{L^{p_j}(\mathbb{R})}^{p_j}
+\int_{\mathbb{R}}|f(x)|\log ({\rm e}+|f(x)|){\rm d}x,
\end{eqnarray*}
and then establishes  (\ref{eq2.17}).

Finally, by (\ref{eq2.17}) and  invoking Lemma \ref{lem2.7} $m$ times, we  obtain the estimate (\ref{eq2.15}). This completes the proof of Lemma \ref{thm2.1}.
\end{proof}
\section{Proof of Theorem \ref{t1.3}}
Let $\mathcal{S}$ be a family of cubes and $\eta\in (0,\,1)$. We say that $\mathcal{S}$ is  an $\eta$-sparse family,  if, for each fixed $Q\in \mathcal{S}$, there exists a measurable subset $E_Q\subset Q$, such that $|E_Q|\geq \eta|Q|$ and $\{E_{Q}\}$ are pairwise disjoint. A sparse family is called simply  sparse if $\eta=1/2$. For a fixed cube $Q$, denote by $\mathcal{D}(Q)$ the set of dyadic cubes with respect to $Q$, that is, the cubes from $\mathcal{D}(Q)$ are formed by repeating subdivision of $Q$ and each of descendants into $2^n$ congruent subcubes.

For constants $\beta_1,\,\dots,\,\beta_m\in [0,\,\infty)$, let $\vec{\beta}=(\beta_1,\,\dots,\,\beta_m)$. Associated with  the sparse family $\mathcal{S}$ and  $\vec{\beta}$, we define  sparse operator $\mathcal{A}_{m;\,\mathcal{S},L(\log L)^{\vec{\beta}}}$  by
$$\mathcal{A}_{m;\,\mathcal{S},\,L(\log L)^{\vec{\beta}}}(f_1,\dots,f_m)(x)=\sum_{Q\in\mathcal{S}}\prod_{j=1}^m\|f_j\|_{L(\log L)^{\beta_j},\,Q}\chi_{Q}(x).$$

\begin{lemma}\label{lem3.1}Let $p_1,\,\dots,\,p_m\in (1,\,\infty)$, $p\in (0,\infty)$ such that $1/p=1/p_1+\dots+1/p_m$, and $\vec{w}=(w_1,\,\dots,\,w_m)\in A_{\vec{P}}(\mathbb{R}^{mn})$. Set $\sigma_i=w_i^{-1/(p_i-1)}$. Let $\mathcal{S}$ be a sparse family. Then
for $\beta_1,\,\dots,\,\beta_m\in [0,\,\infty)$,
$$\big\|\mathcal{A}_{m;\mathcal{S},L(\log L)^{\vec{\beta}}}(f_1,\dots,f_m)\big\|_{L^p(\mathbb{R}^n,\nu_{\vec{w}})}\lesssim [\vec{w}]_{A_p}^{\max\{1,\frac{p_1'}{p},\dots,\frac{p_m'}{p}\}}\prod_{j=1}^m[\sigma_j]_{A_{\infty}}^{\beta_j}
\|f_j\|_{L^{p_j}(\mathbb{R}^n,w_j)}.$$
If $\vec{w}\in A_{1,\,\dots,\,1}(\mathbb{R}^{mn})$, then
\begin{eqnarray*}&&\nu_{\vec{w}}(\{x\in\mathbb{R}^n:\,\mathcal{A}_{m;\,\mathcal{S},L(\log L)^{\vec{\beta}}}(f_1,\,\dots,\,f_m)(x)>1\})\\
&&\quad\lesssim\prod_{j=1}^m\Big(\int_{\mathbb{R}^n}
|f_j(y_j)|\log^{|\beta|}\big(1+|f_j(y_j)|\big)w_j(y_j){\rm d}y_j\Big)^{\frac{1}{m}},
\end{eqnarray*}
with $|\beta|=\sum_{j=1}^m|\beta_j|$.
\end{lemma}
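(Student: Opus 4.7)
The plan is to prove the strong-type bound by duality together with a reduction to the log-free multilinear sparse inequality, and the weak-type endpoint by a Calder\'on-Zygmund decomposition adapted to the sparse family.

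For the strong-type bound, I would dualize against a nonnegative $g$ with $\|g\|_{L^{p'}(\nu_{\vec w})}\le 1$ (using a standard workaround via sparseness when $p<1$ so that $p'$ is not literally needed), reducing the estimate to controlling
$$\sum_{Q\in\mathcal{S}}\prod_{j=1}^m\|f_j\|_{L(\log L)^{\beta_j},Q}\int_Q g\,\nu_{\vec w}\,dx.$$
The key ingredient is a Hyt\"onen-P\'erez type Orlicz inequality: for $w\in A_\infty$ and every cube $Q$, the $L(\log L)^\beta$ Orlicz average of $|f|$ on $Q$ is dominated by $[w]_{A_\infty}^{\beta}$ times a weighted $L^1$ average with respect to $w$, properly normalized by $w(Q)/|Q|$. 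Applying this cube by cube with $w=\sigma_j$ replaces each Orlicz norm by a plain weighted average at the cost of the factor $\prod_j[\sigma_j]_{A_\infty}^{\beta_j}$, and the remaining sparse form is handled by the multilinear $A_{\vec P}$ sparse inequality used in \cite{chenhu}, which supplies the factor $[\vec w]_{A_{\vec P}}^{\max\{1,p_1'/p,\dots,p_m'/p\}}$ and the product $\prod_j\|f_j\|_{L^{p_j}(w_j)}$.

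For the weak-type endpoint at $p_1=\cdots=p_m=1$, I would perform a Calder\'on-Zygmund type decomposition of each $f_j$ with respect to $w_j$ at a level adapted to the sparse family, split the sparse operator into good and bad contributions, and use that $\nu_{\vec w}\in A_{\vec P}$ controls the exceptional set through a geometric-mean Chebyshev inequality. The logarithmic growth $\log^{|\vec \beta|}$ arises from iterating the generalized H\"older inequality in Orlicz spaces on the bad parts, where the $L(\log L)^{\beta_j}$ norms sum against the indicators of the decomposition cubes and accumulate $|\vec\beta|$ powers of the logarithm via a Carleson embedding argument along the sparse tree.

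The main obstacle is the Orlicz-$A_\infty$ step, where one must dualize the $L(\log L)^{\beta_j}$ norm against $\sigma_j$ rather than against $\nu_{\vec w}$; otherwise the $A_\infty$ constant gets replaced by the full $A_{\vec P}$ constant and the sharp exponent $\max\{1,p_j'/p\}$ on $[\vec w]_{A_{\vec P}}$ is degraded. A careful choice of the complementary Young function pair $(t\log^{\beta_j}(e+t),\,e^{t^{1/\beta_j}})$ together with Hyt\"onen-P\'erez's weighted dyadic testing is what makes this bookkeeping close cleanly.
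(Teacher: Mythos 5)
The paper does not prove Lemma~\ref{lem3.1}: it simply records ``For the proof of Lemma~\ref{lem3.1}, see \cite{chenhu}.'' So there is no argument in this paper to compare your sketch against; what follows assesses your sketch on its own.

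Your overall template---replace each $L(\log L)^{\beta_j}$ local average by an ordinary average at the cost of $\prod_j[\sigma_j]_{A_\infty}^{\beta_j}$, then invoke the sharp log-free multilinear $A_{\vec P}$ sparse bound---is indeed the standard route (Lerner--Ombrosi--Rivera-R\'ios \cite{ler3} and, presumably, \cite{chenhu}). But the mechanism you give for the Orlicz--$A_\infty$ step is not correct. The cube-wise estimate you state, essentially
$\|f\|_{L(\log L)^{\beta},Q}\lesssim [w]_{A_\infty}^{\beta}\cdot\frac{1}{|Q|}\int_Q |f|\,w\,dx$
(up to the normalization you describe), is false for general $f$: with $w\equiv 1$ (so $[w]_{A_\infty}=1$) it would read $\|f\|_{L(\log L)^\beta,Q}\lesssim \langle|f|\rangle_Q$, which is simply not true for $\beta>0$. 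What \emph{is} true cube-by-cube is the statement about the weight itself,
$\|\sigma\|_{L(\log L)^\beta,Q}\lesssim [\sigma]_{A_\infty}^\beta\,\langle\sigma\rangle_Q$,
which follows from the Fujii--Wilson characterization of $[\sigma]_{A_\infty}$ together with the Hyt\"onen--P\'erez self-improvement argument. Turning this into a bound for arbitrary $f_j$ requires writing $f_j = h_j\sigma_j$, proving a \emph{summed} Carleson-type testing estimate of the form $\sum_{Q\in\mathcal{S},\,Q\subset R}\|\sigma_j\|_{L(\log L)^{\beta_j},Q}\,|Q|\lesssim [\sigma_j]_{A_\infty}^{\beta_j}\,\sigma_j(R)$, and then applying the (weighted) Carleson embedding theorem. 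That summed step---the place where the sparseness of $\mathcal{S}$ is genuinely used---is exactly what your ``cube by cube'' replacement elides, and without it the factor $[\sigma_j]_{A_\infty}^{\beta_j}$ cannot be extracted.

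Your endpoint sketch is too loosely described to evaluate in detail, but note that for a sparse operator the weak-type bound is usually obtained directly from the sparse/dyadic structure (level sets of the sparse sum, sparseness to control the overlap, generalized H\"older in Orlicz spaces cube by cube to produce the $\log^{|\beta|}$ growth), rather than from a separate Calder\'on--Zygmund decomposition layered on top of the sparse family; the latter would be redundant and would not by itself account for the exponent $|\beta|=\sum_j\beta_j$ in the conclusion.
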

For the proof of  Lemma \ref{lem3.1}, see \cite{chenhu}.

In the following, we say that $U$ is an $m$-sublinear operator, if $U$ satisfies that for each $i$ with $1\leq i\leq m$,
\begin{eqnarray*}U(f_1,\dots,f_i^1+f_i^2,f_{i+1},\dots,f_m)(x)&\le &U(f_1,\dots,f_i^1,f_{i+1},\dots,f_m)(x)\\
&&+
U(f_1,\dots,f_i^2,\,f_{i+1},\dots,f_m)(x),\end{eqnarray*}
and for any $t\in \mathbb{C}$,
$$U(f_1,\dots,tf_i^1,f_{i+1},\dots,f_m)(x)=t U(f_1,\dots,f_i^1,f_{i+1},\dots,f_m)(x).$$

For an $m$-sublinear operator $U$ and $\kappa\in\mathbb{N}$, let $\mathcal{M}_U^{\kappa}$ be the corresponding grand maximal operator, defined by
\begin{eqnarray*}&&\mathcal{M}_{U}^{\kappa}(f_1,\dots,f_m)(x)\\
&&\quad=\sup_{Q\ni x}\big\|U(f_1,\,\dots,\,f_m)(\xi)-U(f_1\chi_{Q^{\kappa}},\,\dots,\,f_m\chi_{Q^{\kappa}})(\xi)\big\|_{L^{\infty}(Q)},\nonumber
\end{eqnarray*}
with $Q^{\kappa}=3^{\kappa} Q$.
This operator was   introduced by Lerner \cite{ler2} and plays an important role in the proof of weighted estimates for singular integral
operators, see \cite{ler3,chenhu,li}.

\begin{lemma}\label{lem3.2}Let $m,\,\kappa\in\mathbb{N}$, $U$ be an $m$-sublinear operator and $\mathcal{M}_U^{\kappa}$  the corresponding grand maximal operator. Suppose that $U$ is bounded from $L^{q_1}(\mathbb{R}^n)\times\dots\times L^{q_m}(\mathbb{R}^n)$ to $L^{q,\,\infty}(\mathbb{R}^n)$ for some $q_1,\,\dots,\,q_m\in (1,\,\infty)$ and $q\in (1/m,\,\infty)$ with $1/q=1/q_1+\dots+1/q_m$.
Then for bounded functions $f_1,\,\dots,\,f_m$, cube $Q_0\subset \mathbb{R}^n$, and a. e. $x\in Q_0$,
$$\big|U(f_1\chi_{Q_0^{\kappa}},\,\dots,\,f_m\chi_{Q_0^{\kappa}})(x)|\lesssim \prod_{j=1}^m|f_j(x)|+\mathcal{M}_{U}^{\kappa}(f_1\chi_{Q_0^{\kappa}},\,\dots,\,f_m\chi_{Q_0^{\kappa}})(x).$$
\end{lemma}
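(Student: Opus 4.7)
\textbf{Proof proposal for Lemma \ref{lem3.2}.}

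The plan is to fix $x\in Q_0$ (a.e.) and shrink a sequence of cubes $Q_k\ni x$ with $Q_k\subset Q_0$ down to $x$. The starting point is the definition of $\mathcal{M}_U^\kappa$: applied to the tuple $(f_1\chi_{Q_0^\kappa},\dots,f_m\chi_{Q_0^\kappa})$, it yields, for every $Q\ni x$,
\[
\big\|U(f_1\chi_{Q_0^\kappa},\dots,f_m\chi_{Q_0^\kappa})(\xi)-U(f_1\chi_{Q_0^\kappa\cap Q^\kappa},\dots,f_m\chi_{Q_0^\kappa\cap Q^\kappa})(\xi)\big\|_{L^\infty(Q)}\le \mathcal{M}_U^\kappa(f_1\chi_{Q_0^\kappa},\dots,f_m\chi_{Q_0^\kappa})(x).
\]
Thus, specializing to $Q=Q_k$, for a.e.\ $\xi\in Q_k$ one has
\[
|U(f_1\chi_{Q_0^\kappa},\dots)(\xi)|\le |U(f_1\chi_{Q_0^\kappa\cap Q_k^\kappa},\dots)(\xi)|+\mathcal{M}_U^\kappa(f_1\chi_{Q_0^\kappa},\dots)(x).
\]

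Next I would integrate a small power $\tau$ (with $\tau\le 1$ and $\tau<q$, using subadditivity $(a+b)^\tau\le a^\tau+b^\tau$) over $Q_k$. The weak-type hypothesis combined with Kolmogorov's inequality gives
\[
\Big(\frac{1}{|Q_k|}\int_{Q_k}|U(f_1\chi_{Q_0^\kappa\cap Q_k^\kappa},\dots)(\xi)|^\tau\,d\xi\Big)^{1/\tau}\lesssim |Q_k|^{-1/q}\prod_{j=1}^m\|f_j\chi_{Q_0^\kappa\cap Q_k^\kappa}\|_{L^{q_j}}\lesssim \prod_{j=1}^m\big(\langle |f_j|^{q_j}\rangle_{Q_k^\kappa}\big)^{1/q_j},
\]
since $|Q_k^\kappa|/|Q_k|=3^{\kappa n}$. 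Combining the last two displays,
\[
\Big(\frac{1}{|Q_k|}\int_{Q_k}|U(f_1\chi_{Q_0^\kappa},\dots)(\xi)|^\tau\,d\xi\Big)^{1/\tau}\lesssim \prod_{j=1}^m\big(\langle |f_j|^{q_j}\rangle_{Q_k^\kappa}\big)^{1/q_j}+\mathcal{M}_U^\kappa(f_1\chi_{Q_0^\kappa},\dots)(x).
\]

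Finally I would let $k\to\infty$. Since the inputs $f_j\chi_{Q_0^\kappa}$ lie in $L^{q_j}(\mathbb{R}^n)$, the output $U(f_1\chi_{Q_0^\kappa},\dots,f_m\chi_{Q_0^\kappa})$ belongs to $L^{q,\infty}$ and hence to $L^\tau_{\mathrm{loc}}$, so Lebesgue's differentiation theorem identifies the left-hand side with $|U(f_1\chi_{Q_0^\kappa},\dots,f_m\chi_{Q_0^\kappa})(x)|$ at a.e.\ $x$. On the right-hand side, at every Lebesgue point of $|f_j|^{q_j}$ one has $\langle|f_j|^{q_j}\rangle_{Q_k^\kappa}^{1/q_j}\to |f_j(x)|$. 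This yields the desired pointwise bound at a.e.\ $x\in Q_0$.

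The one place that needs care is making the passage to the limit clean: the output is not a priori continuous, so the argument must proceed through the averaged $L^\tau$ bound rather than a pointwise evaluation, and one needs $\tau\le 1$ to distribute the power $\tau$ across the sum before integrating. Everything else is bookkeeping: choosing $\tau\in(0,\min\{1,q\})$, noting that $Q_k^\kappa$ eventually lies inside $Q_0^\kappa$ for $Q_k$ small enough around an interior point (and the boundary of $Q_0$ has measure zero), and invoking standard Lebesgue differentiation for the bounded locally-$L^{q_j}$ functions $f_j$.
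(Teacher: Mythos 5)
Your proof is correct, and it is essentially the standard argument used in the references the paper cites for this lemma (Chen--Hu and Li, following Lerner): bound the output at scale $Q_k$ by the grand maximal term plus the localized piece, control the localized piece by Kolmogorov's inequality applied to the weak-type hypothesis, and send $Q_k\downarrow\{x\}$ via Lebesgue differentiation. The only cosmetic difference from the cited proofs is that they phrase the limit passage in the language of approximate continuity and measure-theoretic selection of good points in a shrinking ball, whereas you integrate $|\cdot|^\tau$ and invoke the differentiation theorem directly; these are two renderings of the same argument, and your choice of $\tau\in(0,\min\{1,q\})$ correctly handles both the subadditivity of $t\mapsto t^\tau$ and the applicability of Kolmogorov.
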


For the proof of Lemma \ref{lem3.2}, see \cite{chenhu, li}.

The following theorem is an extension of Theorem 4.2 in \cite{ler2}, and will be useful in the proof of Theorem \ref{t1.3}.
\begin{theorem}\label{thm3.1}
Let $\beta_1,\,\dots,\,\beta_m\in [0,\,\infty)$, $\kappa,\,m\in\mathbb{N}$ and $U$ be an $m$-sublinear operator and $\mathcal{M}_U^{\kappa}$ be the corresponding grand maximal operator.
Suppose that $U$ is bounded from $L^{q_1}(\mathbb{R}^n)\times\dots\times L^{q_m}(\mathbb{R}^n)$ to $L^{q,\infty}(\mathbb{R}^n)$ for some $q_1,\,\dots,\,q_m\in (1,\,\infty)$ and $q\in (1/m,\,\infty)$ with $1/q=1/q_1+\dots+1/q_m$,
\begin{eqnarray*}&&\big|\big\{x\in\mathbb{R}^n:\,\mathcal{M}_{U}^{\kappa}(f_1,\,\dots,f_m)(x)>1\big\}\big|\\
&&\quad\le C_1\sum_{j=1}^m\int_{\mathbb{R}^n}|f_j(y_j)|\log^{\beta_j}\big({\rm e}+|f_j(y_j)|\big){\rm d}y_j.\nonumber
\end{eqnarray*} Then for bounded functions $f_1,\,\dots,f_m\in L^1(\mathbb{R}^n)$ with compact supports, there exists a $\frac{1}{2}\frac{1}{3^{\kappa n}}$-sparse of family $\mathcal{S}$ such that for a. e. $x\in\mathbb{R}^n$,
\begin{eqnarray*}|U(f_1,\,\dots,\,f_m)(x)|\lesssim \sum_{Q\in\mathcal{S}}\prod_{j=1}^m\|f_j\|_{L(\log L)^{\beta_j},\,Q}\chi_{Q}(x).\end{eqnarray*}
\end{theorem}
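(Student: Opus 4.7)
My plan is to follow Lerner's recursive stopping-time construction \cite{ler2}, in the multilinear formulation developed in \cite{chenhu,li}, adapted to the Luxemburg-norm framework of the hypotheses. After a standard localization, it suffices to fix a large dyadic cube $Q_0$ with $\bigcup_j\mathrm{supp}\,f_j\subset Q_0$ and construct a $\tfrac12$-sparse family $\mathcal{F}\subset\mathcal{D}(Q_0)$ such that $|U(f_1,\dots,f_m)(x)|\lesssim\sum_{Q\in\mathcal{F}}\prod_j\|f_j\|_{L(\log L)^{\beta_j},Q^{\kappa}}\chi_Q(x)$. Relabeling the family by $\{Q^{\kappa}\}_{Q\in\mathcal{F}}$ and using $|Q^{\kappa}|=3^{\kappa n}|Q|$ together with $E_Q\subset Q\subset Q^{\kappa}$ converts this into a $\tfrac{1}{2\cdot 3^{\kappa n}}$-sparse domination of the kind stated in the conclusion.

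The one-step construction at a generic cube $Q$ proceeds as follows. Put $A_j=\|f_j\|_{L(\log L)^{\beta_j},Q^{\kappa}}$ and define
\begin{multline*}
E(Q)=\bigl\{x\in Q:\,\textstyle\prod_j|f_j(x)|>C_0\prod_j A_j\bigr\}\\
\cup\bigl\{x\in Q:\,\mathcal{M}_U^{\kappa}(f_1\chi_{Q^{\kappa}},\dots,f_m\chi_{Q^{\kappa}})(x)>C_0\textstyle\prod_j A_j\bigr\}.
\end{multline*}
By Lemma \ref{lem3.2}, for $x\in Q\setminus E(Q)$ one has $|U(f_1\chi_{Q^{\kappa}},\dots,f_m\chi_{Q^{\kappa}})(x)|\lesssim C_0\prod_j A_j$, which is the required averaged bound on the sparse piece. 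I control $|E(Q)|$ via two weak-type inputs: the hypothesis on $\mathcal{M}_U^{\kappa}$ combined with the Luxemburg normalization $\int_{Q^{\kappa}}(|f_j|/A_j)\log^{\beta_j}(\mathrm{e}+|f_j|/A_j)\leq|Q^{\kappa}|=3^{\kappa n}|Q|$, and the pointwise majorization $\prod_j|f_j(x)|\leq\prod_jM_{L(\log L)^{\beta_j}}(f_j\chi_{Q^{\kappa}})(x)$ together with \eqref{eq2.01}. Both inputs yield $|E(Q)|\leq\tfrac14|Q|$ once $C_0$ is taken large. A Calder\'on-Zygmund selection inside $Q$ at level $\tfrac12$ then produces pairwise disjoint maximal dyadic subcubes $\{P_i\}\subset\mathcal{D}(Q)$ with $\sum_i|P_i|\leq\tfrac12|Q|$ and $E(Q)\subset\bigcup_i P_i$ up to null sets; iterating generation by generation starting from $Q_0$ produces the desired $\tfrac12$-sparse family $\mathcal{F}$.

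For the pointwise telescoping, on $Q\setminus\bigcup_i P_i$ the averaged bound is already in place, while for $x\in P_i$ the definition of $\mathcal{M}_U^{\kappa}$ applied at the cube $P_i$ yields
\begin{multline*}
|U(f_1\chi_{Q^{\kappa}},\dots,f_m\chi_{Q^{\kappa}})(x)|\\
\leq\mathcal{M}_U^{\kappa}(f_1\chi_{Q^{\kappa}},\dots,f_m\chi_{Q^{\kappa}})(x)+|U(f_1\chi_{P_i^{\kappa}},\dots,f_m\chi_{P_i^{\kappa}})(x)|,
\end{multline*}
the first term being absorbed into the ancestor cube's contribution and the second feeding the recursion at the level of $P_i$; the required containment $P_i^{\kappa}\subset Q^{\kappa}$ is arranged by working on an appropriate shifted dyadic grid, which is what eventually produces the $3^{\kappa n}$ factor. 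Summing as $Q$ ranges over $\mathcal{F}$ gives the claimed sparse estimate. The main obstacle I expect is precisely the telescoping inequality just displayed: because $U$ is only $m$-sublinear, no literal subtraction of two $U$-values is available, so this inequality must instead be extracted from the $L^{\infty}(P_i)$ essential-supremum definition of $\mathcal{M}_U^{\kappa}$ together with the a.e.\ finiteness of $U(f_1\chi_{Q^{\kappa}},\dots,f_m\chi_{Q^{\kappa}})$ supplied by the weak-type $L^{q_1}\times\cdots\times L^{q_m}\to L^{q,\infty}$ hypothesis.
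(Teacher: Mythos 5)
Your high-level scheme is the right one and matches the paper's: define the exceptional set $E(Q)$ via the level sets of $\prod_j|f_j|$ and of $\mathcal{M}_U^{\kappa}(f_1\chi_{Q^{\kappa}},\dots,f_m\chi_{Q^{\kappa}})$, control $|E(Q)|$ from the Luxemburg normalization and the weak-type hypothesis, select stopping cubes, invoke Lemma \ref{lem3.2} on the complement, and iterate; the relabeling $\mathcal{F}\mapsto\{Q^{\kappa}\}_{Q\in\mathcal{F}}$ that converts $\tfrac12$-sparsity to $\tfrac{1}{2\cdot3^{\kappa n}}$-sparsity is also exactly what is used. However, the telescoping step as you wrote it has a genuine gap. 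You apply the definition of $\mathcal{M}_U^{\kappa}$ at the same point $x\in P_i$ to get
$|U(f_1\chi_{Q^{\kappa}},\dots)(x)|\le\mathcal{M}_U^{\kappa}(f_1\chi_{Q^{\kappa}},\dots)(x)+|U(f_1\chi_{P_i^{\kappa}},\dots)(x)|$,
and then assert that the first term is ``absorbed into the ancestor cube's contribution.'' But for $x\in P_i\cap E(Q)$ the quantity $\mathcal{M}_U^{\kappa}(f_1\chi_{Q^{\kappa}},\dots)(x)$ need not be controlled by $C_0\prod_j A_j$ at all; indeed the stopping cubes are selected precisely where this maximal function can be large, so there is nothing to absorb. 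The correct move, which is what the paper's inequality (\ref{eq3.02}) encodes, is to observe that the $L^{\infty}(P_i)$ norm
$\|U(f_1\chi_{Q^{\kappa}},\dots)-U(f_1\chi_{P_i^{\kappa}},\dots)\|_{L^{\infty}(P_i)}$
is bounded by $\mathcal{M}_U^{\kappa}(f_1\chi_{Q^{\kappa}},\dots)(z)$ for \emph{every} $z\in P_i$, hence by $\inf_{z\in P_i}\mathcal{M}_U^{\kappa}(\cdot)(z)$, and that this infimum is $\le C_0\prod_j A_j$ because $P_i\cap E(Q)^{c}\ne\emptyset$.

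This last fact is exactly where your Calder\'on--Zygmund selection is also slightly off. Running the selection at level $\tfrac12$ on $\chi_{E(Q)}$ gives maximal dyadic cubes $P_i$ with $|P_i\cap E(Q)|/|P_i|>\tfrac12$ but only $|P_i\cap E(Q)|\le 2^{n}\cdot\tfrac12\,|P_i|$, which for $n\ge1$ does not prevent $P_i\subset E(Q)$ up to a null set, so a good point $z\in P_i\cap E(Q)^{c}$ need not exist. The paper instead takes $C_2$ large enough that $|E|\le 2^{-n-2}|Q_0|$ and performs the selection at level $2^{-n-1}$, yielding $|P_i\cap E|\le\tfrac12|P_i|$ (so $P_i$ genuinely meets $E^{c}$) while still giving $\sum_i|P_i|\le\tfrac12|Q_0|$. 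With these two repairs — the dimension-dependent CZ level and the good-point evaluation of $\mathcal{M}_U^{\kappa}$ — your argument becomes the paper's proof.
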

\begin{proof} We employ the argument used in \cite{ler2}, together with suitable modifications, see also \cite{chenhu,li}.  As in \cite{chenhu,li}, it suffices to prove  that for each cube $Q_0\subset \mathbb{R}^n$,  there exist pairwise disjoint cubes $\{P_j\}\subset \mathcal{D}(Q_0)$, such that $\sum_{j}|P_j|\leq \frac{1}{2}|Q_0|$ and for a. e. $x\in Q_0$,
\begin{eqnarray}\label{eq3.01}
&&|U(f_1\chi_{Q_0^{\kappa}},\dots,\,f_m\chi_{Q_0^{\kappa}})(x)|\chi_{Q_0}(x)\\
&&\quad\le C\prod_{i=1}^m\|f_i\|_{L(\log L)^{\beta_i},Q_0^{\kappa}}+\sum_{j}|U(f_1\chi_{P_j^{\kappa}},\,\dots,f_m\chi_{P_j^{\kappa}})(x)|\chi_{P_j}(x).\nonumber
\end{eqnarray}To prove this, let
$C_2>1$ which will be chosen later and \begin{eqnarray*}E&=&\big\{x\in Q_0:\,|f_1(x)\dots f_m(x)|> C_2 \prod_{i=1}^m\|f_i\|_{L(\log L)^{\beta_i},\,Q_0^{\kappa}}\big\}\\
&&
\cup\big\{x\in Q_0:\,\mathcal{M}_{U}^{\kappa}(f_1\chi_{Q_0^{\kappa}},\,\dots,f_m\chi_{Q_0^{\kappa}})(x)>C_2\prod_{i=1}^m\|f_i\|_{L(\log L)^{\beta_i},\,Q_0^{\kappa}}\big\}.
\end{eqnarray*}
Our assumption implies that
\begin{eqnarray*}
&&\big|\{x\in Q_0:\,\mathcal{M}_{U}^{\kappa}(f_1\chi_{Q_0^{\kappa}},\dots,f_m\chi_{Q_0^{\kappa}})(x)>C_2\prod_{i=1}^m\|f_i\|_{L(\log L)^{\beta_i},\,Q_0^{\kappa}}\big\}\big|\\
&&\quad\le \frac{C_1}{C_2}\sum_{i=1}^m\int_{Q_0^{\kappa}}\frac{|f_i(y_i)|}{\|f_i\|_{L(\log L)^{\beta_i},\,Q_0^{\kappa}}}\log^{\beta_i} \Big({\rm e}+
\frac{|f_i(y_i)|}{\|f_i\|_{L(\log L)^{\beta_i},\,Q^{\kappa}_0}}\Big){\rm d}y_i\\
&&\quad\leq \frac{C_1}{C_2}|Q_0|.
\end{eqnarray*}since
$$\int_{Q_0^{\kappa}}\frac{|f_i(y_i)|}{\|f_i\|_{L(\log L)^{\beta_i},\,Q_0^{\kappa}}}\log^{\beta_i} \Big({\rm e}+
\frac{|f_i(y_i)|}{\|f_i\|_{L(\log L)^{\beta_i},\,Q^{\kappa}_0}}\Big){\rm d}y_i\leq |Q_0^{\kappa}|.$$
If we choose $C_2$ large enough, our assumption then says  that $|E|\leq \frac{1}{2^{n+2}}|Q_0|.$ Now applying the Calder\'on-Zygmund decomposition to $\chi_E$ on $Q_0$ at level $\frac{1}{2^{n+1}}$, we then obtain a family of pairwise disjoint cubes $\{P_j\}$ such that
$$\frac{1}{2^{n+1}}|P_j|\leq |P_j\cap E|\leq \frac{1}{2}|P_j|,$$ and $|E\backslash \cup_jP_j|=0$. It then follows that $\sum_j|P_j|\leq \frac{1}{2}|Q_0|$,  and $P_j\cap E^c\not =\emptyset.$ Therefore,
\begin{eqnarray}\label{eq3.02}
&&\Big\|U(f_1\chi_{Q_0^{\kappa}},\dots,f_m\chi_{Q_0^{\kappa}})(\xi)-
U(f_1\chi_{P^{\kappa}_j},\dots,f_m\chi_{P^{\kappa}_j})(\xi)\Big\|_{L^{\infty}(P_j)}\\
&&\quad\leq C_2\prod_{i=1}^m\|f_i\|_{L(\log L)^{\beta_i},\,Q_0^{\kappa}}.\nonumber
\end{eqnarray}
Note that
\begin{eqnarray}\label{eq3.03}
&&|U(f_1\chi_{Q_0^{\kappa}},\dots,f_m\chi_{Q_0^{\kappa}})(x)|\chi_{Q_0}(x)\\&&
\quad\leq |U(f_1\chi_{Q_0^{\kappa}},\dots,f_m\chi_{Q_0^{\kappa}})(x)|\chi_{Q_0\backslash \cup_jP_j}(x)\nonumber\\
&&\quad+\sum_j\big|U(f_1\chi_{P^{\kappa}_j},\,\dots,\,f_m\chi_{P^{\kappa}_j})(x)\big|\chi_{P_j}(x)\nonumber\\
&&\quad+\sum_j\Big\|U(f_1\chi_{Q_0^{\kappa}},\dots,f_m\chi_{Q_0^{\kappa}})-
U(f_1\chi_{P^{\kappa}_j},\dots,f_m\chi_{P^{\kappa}_j})\Big\|_{L^{\infty}(P_j)}\chi_{P_j}(x).\nonumber\end{eqnarray}
(\ref{eq3.01}) now follows from (\ref{eq3.02}), (\ref{eq3.03}) and Lemma \ref{lem3.2} immediately. This completes the proof of Theorem \ref{thm3.1}.
\end{proof}
For $s\in (0,\,\infty)$, let $M_{s}$ be the maximal operator defined by
$$M_{s}f(x)=\big(M(|f|^{s})(x)\big)^{1/s}.$$
It was proved in \cite[p. 651]{huli} that for $s\in (0,\,1)$ and $\lambda>0$,
\begin{eqnarray}\label{eq3.04}&&
\big|\big\{x\in\mathbb{R}^n:\,M_{s}h(x)>\lambda\big\}\big|\lesssim\lambda^{-1}\sup_{t\geq 2^{-1/{s}}\lambda}t\big|\{x\in\mathbb{R}^n:
|h(x)|>t\}\big|.\end{eqnarray}

{\it Proof of Theorem \ref{t1.3}}. By Lemma \ref{lem3.1}, Theorem \ref{thm3.1} and (\ref{eq2.16}), it suffices to prove that
the grand maximal operator $\mathcal{M}^3_{\mathcal{C}_{m+1,\,A}}$ satisfies that
\begin{eqnarray}\label{eq3.05}&&\big|\{x\in\mathbb{R}:\, \mathcal{M}^3_{\mathcal{C}_{m+1,\,A}}(a_1,\dots,a_m;\,f)(x)>1\}\big|\\
&&\quad\lesssim \sum_{j=1}^{m}\|f_j\|_{L^1(\mathbb{R})}+
\int_{\mathbb{R}}|f(y)|\log\big({\rm e}+|f(y)|\big){\rm d}y.\nonumber
\end{eqnarray}
We assume that $\|A'\|_{{\rm BMO}(\mathbb{R})}=1$ for simplicity.

Let $x\in\mathbb{R}$ and $I$ be a interval containing $x$. For $j=1,\,\dots, m$, set
$$a_j^1(y)=a_j(y)\chi_{27I}(y),\,\,a_j^2(y)=a_j(y)\chi_{\mathbb{R}\backslash 27I}(y).$$
Also, let $$f^1(y)=f(y)\chi_{27I}(y),\,\,f^2(y)=f(y)\chi_{\mathbb{R}\backslash 27I}(y).$$
Set $$\Lambda_1=\{(i_1,\,\dots,i_{m+1}):\,i_1,\dots,i_{m+1}\in\{1,\,2\},\max_{1\leq j\leq m+1}i_j=2,\,\min_{1\leq j\leq m+1}i_j=1\}.$$
Let $A_I(y)$ be the same as in (3.9).
For each fixed $z\in 2I\backslash \frac{3}{2}I$, write
\begin{eqnarray*}
&&\big|\mathcal{C}_{m+1,\,A}(a_1,\,\dots,\,a_m;\,f)(\xi)-\mathcal{C}_{m+1,\,A}(a_1\chi_{27I},\,\dots,\,a_m\chi_{27I};\,f\chi_{27I})(\xi)\big|\\
&&\quad\leq |\mathcal{C}_{m+1,\,A_I}(a_1^2,\dots,a_m^2;f^2)(\xi)-\mathcal{C}_{m+1,\,A_I}(a_1^2,\,\dots,\,a_m^2;\,f^2)(z)|\\
&&\qquad+|\mathcal{C}_{m+1,A_I}(a_1^2,\dots,a_m^2;f^2)(z)|\\
&&\qquad+\sum_{(i_1,\dots,i_m)\in \Lambda_1}\big|\mathcal{C}_{m+1,A_I}(a_1^{i_1},\dots,a_m^{i_m};f^{i_{m+1}})(\xi)\big|\\
&&\quad={\rm D}_1(\xi,\,z)+{\rm D}_2(z)+{\rm D}_3(\xi).
\end{eqnarray*}
As in the estimate (\ref{eq2.13}), we know that for each $z\in 2I\backslash\frac{3}{2}I$,
$${\rm D}_1(\xi,\,z)\lesssim M_{L\log L}f(x)\prod_{j=1}^{m}Ma_j(x).$$

We turn our attention to ${\rm D}_3$. We claim that
for each $y\in 2I$,
\begin{eqnarray}\label{eq3.06}
&&\sum_{(i_1,\,\dots,i_m)\in \Lambda_1}|\mathcal{C}_{m+1;\,A_I}(a_1^{i_1},\dots,a_m^{i_m};\,f^{i_{m+1}})(y)|\lesssim M_{L\log L}f(x)\prod_{j=1}^{m}Ma_j(x).
\end{eqnarray}To see this, we  consider the following two cases.

{\bf Case I}: $i_{m+1}=1$. In this case, $\max_{1\leq k\leq m}i_k=2$. We only consider the case that $i_1=\dots=i_{m-1}=1$ and $i_{m}=2$.
It follows from the size condition (\ref{eq2.06}) that in this case,
\begin{eqnarray*}
|\mathcal{C}_{m+1;\,A_I}(a_1^{i_1},\dots,a_m^{i_m},f^{1})(y)|&\lesssim&\prod_{j=1}^{m-1}\int_{27I}|a_j(y_j)|{\rm d}y_j\int_{\mathbb{R}\backslash 27I}\frac{|a_m(y)|}{|x-y_m|^{m+2}}{\rm d}y\\
&&\qquad\times
\int_{27I}|f(y)||P_2(A_I;y,\,z)|{\rm d}z.
\end{eqnarray*}
Let $q\in (1,\,\infty)$. Another application of Lemma \ref{lem2.1} shows that for $y\in 2I$ and $z\in I$,
\begin{eqnarray*}
\big|A_{I}(z)-A_{I}(y)\big|&\lesssim &|z-y|\Big(\frac{1}{|I_z^{y}|}\int_{I_{z}^{y}}\big|A'(w)-\langle A'\rangle_{I}\big|^qdw\Big)^{1/q}\\
&\lesssim&|z-y|\Big(1+\log\frac{|I|}{|z-y|}\Big)\lesssim |I|,
\end{eqnarray*}
and in this case,
$$|P_2(A_I;z,\,y)|\lesssim |I|(1+|A'-\langle A'\rangle_I|).$$
We thus get
$$|\mathcal{C}_{m+1;\,A_I}(a_1^{i_1},\dots,a_m^{i_m};\,f^{1})(y)|\lesssim M_{L\log L}f(x)\prod_{j=1}^{m}Ma_j(x).
$$

{\bf Case II}:  $i_{m+1}=2$.  As in the   estimates (\ref{eq2.14}), we also have that
$$
\big|\mathcal{C}_{m+1,\,A}(a_1^{i_1},\dots,a_m^{i_m},f^2)(y)\big|\lesssim M_{L\log L}f(x)\prod_{j=1}^{m}Ma_j(x).
$$
Our argument for the above three cases leads to (\ref{eq3.06}).

As to the term ${\rm D}_2$, we have by the inequality (\ref{eq3.06}) that for each $z\in 2I$, 
\begin{eqnarray*}
{\rm D}_2(z)&\leq &|\mathcal{C}_{m+1,\,A}(a_1,\dots,a_m;\,f)(z)|+|\mathcal{C}_{m+1,\,A}(a_1^1,\,\dots,a_m^1;f^1)(z)|\\
&&+\sum_{(i_1,\,\dots,i_m)\in \Lambda_1}\big|\mathcal{C}_{m+1;\,A}(a_1^{i_1},\dots,a_m^{i_m};f^{i_{m+1}})(z)\big|\\
&\lesssim&|\mathcal{C}_{m+1;\,A}(a_1^1,\,\dots,\,a_m^1;\,f^1)(z)|+|\mathcal{C}_{m+1;\,A}(a_1,\,\dots,\,a_m;\,f)(z)|\\
&&+M_{L\log L}f(x)\prod_{j=1}^{m}Ma_j(x).
\end{eqnarray*}

We can now conclude the proof of Theorem \ref{t1.3}. The estimates for ${\rm D}_1$, ${\rm D}_2$ and ${\rm D}_3$, via Lemma \ref{lem2.4}, tell us that for any $\tau\in (0,\,\frac{1}{m+2})$,
\begin{eqnarray*}
&&\sup_{\xi\in Q}\Big|\mathcal{C}_{m+1,\,A}(a_1,\,\dots,\,a_m;\,f)(\xi)-\mathcal{C}_{m+1,\,A}(a_1^2,\,\dots,\,a_m^2;f^2)(\xi)\Big|\\
&&\quad\lesssim  M_{L\log L}f(x)\prod_{j=1}^{m}Ma_j(x)\\
&&\qquad+\Big(\frac{1}{|2I|}\int_{2I}\big|\mathcal{C}_{m+1,\,A}(a_1,\,\dots,\,a_m;\,f)(z)\big|^{\tau}{\rm d}z\Big)^{\frac{1}{\tau}}\nonumber\\
&&\qquad+\Big(\frac{1}{|2I|}\int_{2I}|\mathcal{C}_{m+1;\,A}(a_1^1,\,\dots,\,a_m^1;\,f^1)(z)|^{\tau}{\rm d}z\Big)^{\frac{1}{\tau}}\nonumber\\
&&\quad\lesssim M_{L\log L}f(x)\prod_{j=1}^{m}Ma_j(x)+M_{\tau}\big(\mathcal{C}_{m+1,\,A}(a_1,\,\dots,\,a_m;\,f)\big)(x),\nonumber
\end{eqnarray*}
which implies that
\begin{eqnarray}\label{eq3.07}
\mathcal{M}^3_{\mathcal{C}_{m+1,\,A}}(a_1,\,\dots,\,a_m;\,f)(x)&\lesssim &M_{L\log L}f(x)\prod_{j=1}^{m}Ma_j(x)\\
&&+M_{\tau}\big(\mathcal{C}_{m+1;A}(a_1,\dots,a_m;\,f)\big)(x).\nonumber
\end{eqnarray}
Applying the inequality (\ref{eq3.04}) and Theorem \ref{thm2.1}, we obtain  that
\begin{eqnarray}\label{eq3.08}
&&\big|\big\{x\in\mathbb{R}:\,M_{\tau}\big(\mathcal{C}_{m+1,\,A}(a_1,\,\dots,\,a_m;\,f)\big)(x)>1\}\big|\\
&&\quad\lesssim\sup_{s\geq 2^{-\frac{1}{(m+1)\tau}}} s^{\frac{1}{m+1}}\big|\{x\in\mathbb{R}:|\mathcal{C}_{m+1,\,A}(a_1,\,\dots,\,a_m;\,f)|>s\}\big|\nonumber\\
&&\quad\lesssim\sum_{j=1}^{m}\|a_j\|_{L^1(\mathbb{R})}+\int_{\mathbb{R}}|f(y)|\log ({\rm e}+|f(y)|){\rm d}y.\nonumber
\end{eqnarray}
Combining the estimates (\ref{eq3.07}) and (\ref{eq3.08}) shows that $\mathcal{M}^3_{\mathcal{C}_{m+1,\,A}}$ satisfies (\ref{eq3.05}). This completes the proof of Theorem \ref{t1.3}.
\qed

\end{document}